\numberwithin{equation}{section}
\newtheorem{prop}{Proposition}
\newtheorem{theorem}[prop]{Theorem}
\newtheorem{corollary}[prop]{Corollary}
\newtheorem{lemma}[prop]{Lemma}
\newtheorem{fact}[prop]{Fact}
\newtheorem{problem}[prop]{Problem}
\theoremstyle{definition}
\newtheorem{definition}[prop]{Definition}
\newtheorem{example}[prop]{Example}
\newtheorem{remark}[prop]{Remark}
\numberwithin{prop}{section}
\newcommand{\maj}{\mathrm{maj}}
\newcommand{\comaj}{\mathrm{comaj}}
\newcommand{\Des}{\mathrm{Des}}
\newcommand{\extDes}{\mathrm{Des_e}}
\newcommand{\barmaj}{\overline{\mathrm{maj}}}
\newcommand{\barcomaj}{\overline{\mathrm{comaj}}}
\newcommand{\bnumber}{\mathrm{b}}
\newcommand{\ten}{10}
\newcommand{\eleven}{11}
\newcommand{\twelv}{12}
\newcommand{\thirteen}{13}
\newcommand{\fourteen}{14}
\newcommand{\fifteen}{15}
\newcommand{\sixteen}{16}
\newcommand{\bone}{\textcolor{blue}{1}}
\newcommand{\btwo}{\textcolor{blue}{2}}
\newcommand{\bthree}{\textcolor{blue}{3}}
\newcommand{\bseven}{\textcolor{blue}{7}}
\newcommand{\beight}{\textcolor{blue}{8}}
\newcommand{\bnine}{\textcolor{blue}{9}}
\newcommand{\bten}{\textcolor{blue}{10}}
\newcommand{\bfifteen}{\textcolor{blue}{15}}
\newcommand{\boldten}{\textcolor{red}{\mathbf{10}}}
\newcommand{\boldnine}{\mathbf{\textcolor{red}{9}}}
\newcommand{\boldeight}{\mathbf{\textcolor{red}{8}}}
\newcommand{\boldseven}{\mathbf{\textcolor{red}{7}}}
\newcommand{\boldsix}{\mathbf{\textcolor{red}{6}}}
\newcommand{\boldfive}{\mathbf{\textcolor{red}{5}}}
\newcommand{\boldfour}{\mathbf{\textcolor{red}{4}}}
\newcommand{\boldthree}{\mathbf{\textcolor{red}{3}}}
\newcommand{\boldtwo}{\mathbf{\textcolor{red}{2}}}
\newcommand{\boldone}{\mathbf{\textcolor{red}{1}}}
\begin{document}
\title{Promotion and evacuation on standard Young tableaux of rectangle and staircase shape}

\author[S.~Pon]{Steven Pon}
\address{Department of Mathematics, University of California, One Shields
Avenue, Davis, CA 95616-8633, U.S.A.}
\email{spon@math.ucdavis.edu}
\urladdr{http://www.math.ucdavis.edu/\~{}spon}

\author[Q.~Wang]{Qiang Wang}
\email{xqwang@math.ucdavis.edu}
\urladdr{http://www.math.ucdavis.edu/\~{}xqwang}
\thanks{Both authors were partially supported by NSF grants DMS--0652641, and DMS--0652652 
for this work}

\begin{abstract}
(Dual-)promotion and (dual-)evacuation are bijections on $SYT(\lambda)$ for any partition 
$\lambda$. Let $c^r$ denote the rectangular partition $(c,\ldots,c)$ of height $r$, 
and let $sc_k$ ($k > 2$) denote the staircase partition $(k,k-1,\ldots,1)$.  
B. Rhoades showed representation-theoretically that promotion on $SYT(c^r)$ exhibits 
the \textit{cyclic sieving phenomenon} (CSP).
In this paper, we demonstrate a promotion- and evacuation-preserving embedding 
of $SYT(sc_k)$ into $SYT(k^{k+1})$.  This arose from an attempt to demonstrate
 the CSP of promotion action on $SYT(sc_k)$.


\end{abstract}

\maketitle

\section{Introduction}

Let $X$ be a finite set and let $C=\langle a \rangle$ be a cyclic group of order $N$
acting on $X$. Let $X(q) \in \mathbb{Z}[q]$ be a polynomial with integer coefficients.
We say that the triple $(X,C,X(q))$ exhibits the \textbf{cyclic sieving phenomenon} (CSP) 
if for any integer $k$, we have 
\begin {equation} \label {eq: CSP polynomial}
    X(\zeta^k)  = \#\{x\in X \mid a^k \cdotp x=x\},
\end {equation}
where $\zeta = e^{2\pi i/N}$ is a $N$-th root of unity. We will call $X(q)$ a 
\textbf{CSP polynomial}.

Given $X$ and $C$, the existence of a CSP polynomial with non-negative integer coefficients
is necessary. Indeed, if we view $a \in Perm(X)$ and let $m_c$ be the multiplicity of cycle(s) 
of size $c$ in the cycle notation of $a$ (clearly, $m_c > 0$ implies that $c | N$), 
and let $p_c = \sum_{k=0,\ldots,c-1} q^{k\cdots N/c}$, then $$ p_{a,X}=\sum_{c \in \mathbb{N}}m_c\cdots p_c $$
is such a CSP polynomial. 
 
The set of all CSP polynomials forms a coset $\overline{p_{a,X}}$ in the ring 
$\mathbb{Z}[q]/\langle q^N-1 \rangle$ and $p_{a,X}$ is the least degree representative of 
this coset.

The interesting instances of the CSP (\cite{r-s-w}, \cite{k-m}, \cite{westbury},
\cite{s-s-w}, \cite{p-s}, \cite{p-p-r}, \cite{be-re}, and \cite{e-f}, etc.) 
are those whose CSP polynomials have a ``natural'' meaning,
for example, the $q$-analogues of some counting formulae of the set $X$. In nearly all of
these interesting instances of the CSP, $X(q)$ is also a generating function 
$$X(q) = \sum_{x\in X} q^{\mu(x)}$$ 
of an intrinsic statistic $\mu : X \to \mathbb{Z}$ on $X$.  
In the CSP instances where such an intrinsic statistic exists, we use the more
explicit triple $(X,C,\mu)$ to denote it.

V. Reiner, D. Stanton, and D. White first formalized the notion of the CSP in \cite{r-s-w}.
Before them, Stembridge considered the ``$q=-1$'' phenomenon \cite{stemb}, 
which is the special case of the CSP with $N=2$ (where $\zeta=e^{2\pi i/2}=-1$). 

Promotion $\partial$ (Definition~\ref{definition: promotion}) and 
evacuation $\epsilon$ (Definition~\ref{definition: evacuation}) are closely related 
permutations on the set of standard Young tableaux $SYT(\lambda)$ for any given shape 
$\lambda$. Sch\"utzenberger studied them in \cite{schutz1, schutz2, schutz3}
as bijections on $SYT(\lambda)$, and later as permutations
on the linear extensions of any finite poset. 
Edelman and Greene \cite{e-g}, and Haiman~\cite{haiman} showed important properties of them; 
in particular, they found the order of promotion on $SYT(\lambda)$ when $\lambda$ is a 
staircase shape. In 2008, Stanley gave a terrific survey \cite{RS:2008} of previous results on
promotion and evacuation.

Important instances of the CSP arise from the actions of promotion and evacuation on 
standard Young tableaux. For example, Stembridge \cite{stemb} showed that 
$(SYT(\lambda), \langle\epsilon\rangle, \overline{\comaj})$ exhibits the CSP, 
where $\lambda$ is any partition shape, and 
$\overline{comaj}$ is a statistic
on standard Young tableaux that is closely related to the comajor index. 

As another example, 
B. Rhoades \cite{rhoades} showed that $(SYT(c^r),\langle\partial\rangle, \overline{\maj})$ 
exhibits the CSP, 
where $c^r$ is the rectangular partition of $r$ equal parts of size $c$, 
$\partial$ is promotion  on $SYT(c^r)$, and
$\overline{\maj}$ is a statistic on 
standard Young tableaux that is closely related to the major index. 


Since the introduction of the CSP in \cite{r-s-w}, much effort has been made in 
demonstrating interesting instances of it (\cite{k-m}, \cite{westbury},
\cite{s-s-w}, \cite{p-s}, \cite{p-p-r}, \cite{be-re}, and \cite{e-f}, etc.), 
or generalizing it (\cite{b-e-r-s}). 

In this paper we report our current progress in attacking the 
following problems:
\begin{problem} \label{problem 2}
Demonstrate the CSP of promotion action on staircase tableaux $SYT(sc_k=(k,k-1,\ldots,1))$.  
More specifically, this could mean any one of the following three tasks, listed by increasing difficulty:
\begin{itemize}
  \item Find a counting formula for $SYT(sc_k)$, the $q$-analogue of which provides
        a CSP polynomial for the promotion action on $SYT(sc_k)$.
  \item Find a ``natural'' statistic on $SYT(sc_k)$, the generating function of which provides
        a CSP polynomial for the promotion action on $SYT(sc_k)$.
  \item Find a statistic on $SYT(\lambda)$, the generating function of which provides
        a CSP polynomial for the promotion action on $SYT(\lambda)$. In particular,
        this statistic should have the same distribution as $\overline{\maj}$ on
        $SYT(c^r)$.
\end{itemize}
\end{problem}

The reported progress is the
construction of an embedding $\iota:  SYT(sc_k) \hookrightarrow SYT(k^{(k+1)})$ that 
preserves promotion and evacuation. This enables us to extend Rhoades' definition 
of the ``extended descent'' from rectangular tableaux to staircase tableaux.  


This paper is organized in the following way:   
In Section~\ref{section: definitions and preliminaries}, we define the terminology and 
notation, and review several basic results that are used in later sections. 

In Section~\ref{section: the embedding}, we construct the embedding $\iota$ and prove
our main results about $\iota$: Theorems \ref{theorem: embedding preserves promotion} 
and \ref{theorem: embedding preserves evacuation}, which state that promotion and evacuation are
preserved under the embedding.

In Section~\ref{section: descent vector}, we extend Rhoades' construction of ``extended descent''
on rectangular tableaux to that on staircase tableaux by using $\iota$; 
our main results in this section are Theorems \ref{theorem: edv and iota}, \ref{theorem: posdv},
 \ref{theorem: eosdv} and \ref{theorem: deosdv}, which state that the extended descent 
data nicely records the actions of (dual-)promotion and (dual-)evacuation on both rectangular
and staircase tableaux.

In Section~\ref{section: future}, we explain how the embedding $\iota$ arose and pose 
some questions about it. 



\section{Definitions and Preliminaries} \label{section: definitions and preliminaries}

This section is a review of those notions, notations and facts about Young tableaux that
are directly used in the following sections. 
We assume the reader's basic knowledge of tableaux theory -- partitions, standard Young 
tableaux, Knuth equivalence, reading word of a tableau, jeu-de-taquin, and RSK algorithm, etc.  All
of our tableaux and directional references (e.g., north, west, etc.) will refer to tableaux
in ``English'' notation.  For more on these topics, see \cite{ec2} or \cite{ful}.
\subsection{Basic definitions}
  
\begin{definition}\label{definition: promotion}
Given $T \in SYT(\lambda)$ for any (skew) shape $\lambda \vdash n$, the \textbf{promotion} action 
on $T$, denoted by $\partial(T)$, is given as follows:

Find in $T$ the outside corner that contains the number $n$, and remove it to create an empty box.
Apply jeu-de-taquin repeatedly to move the empty box northwest until the empty box is
an inside corner of $\lambda$. (We call this process \textbf{sliding}, the sequence of
positions that the empty box moves along in this process is called \textbf{sliding path}).   
Place $0$ in the empty box. Now add one to each entry of the current filling of $\lambda$ 
so that we again have a standard Young tableau.  This new tableau is $\partial(T)$,
the promotion of $T$. 

In the case that sliding is used to define promotion, we will refer to the sliding path as the
\textbf{promotion path}. 
\end{definition}

\begin{remark}
Edelman and Greene (\cite{e-g}) call $\partial$ defined above ``elementary promotion.''
They call $\partial^n$ the ``promotion operator.''

\end{remark}

\begin{example}{Promotion on standard tableaux.}
$$
\young(145,268,37\thirteen,9\ten\bfifteen,\eleven\fourteen,\twelv)
\to
\young(145,268,37\thirteen,9\ten\hfil,\eleven\fourteen,\twelv)
\to
\young(145,268,37\hfil,9\ten\thirteen,\eleven\fourteen,\twelv)
\to
\young(145,26\hfil,378,9\ten\thirteen,\eleven\fourteen,\twelv)
\to
\young(145,2\hfil6,378,9\ten\thirteen,\eleven\fourteen,\twelv)
$$
$$
\to
\young(1\hfil5,246,378,9\ten\thirteen,\eleven\fourteen,\twelv)
\to
\young(\hfil15,246,378,9\ten\thirteen,\eleven\fourteen,\twelv)
\to
\young(015,246,378,9\ten\thirteen,\eleven\fourteen,\twelv)
\to
\young(126,357,489,\ten\eleven\fourteen,\twelv\fifteen,\thirteen)
$$

If we label the boxes by $(i,j)$, with $i$ being the row index from top to bottom and $j$
being the column index from left to right, and the northwest corner being labelled $(1,1)$, 
then the promotion path corresponding to the above example is 
$[(4,3),(3,3),(2,3),(2,2),(1,2),(1,1)]$.
\end{example}

Promotion $\partial$ has a dual operation, called \textbf{dual-promotion}, denoted 
by $\partial^*$ and defined as follows:

\begin{definition}
Find in $T$ the inside corner that contains $1$, and remove it to create 
an empty box.  
Apply jeu-de-taquin repeatedly to move the empty box southeast until it is an outside corner of 
$\lambda$. (We call this process \textbf{dual-sliding}, and the sequence of
positions that the empty box moves along in this process is called the \textbf{dual-sliding path}). 
Place the number $n+1$ in this outside corner. Now subtract one from each entry so that 
we again have a standard Young tableau.  
This new tableau is $\partial^*(T)$, the dual-promotion of $T$.

In the case that dual-sliding is used to define promotion, we will refer to the dual-sliding 
path as the \textbf{dual-promotion path}. 
\end{definition}

\begin{example}{Dual-promotion on standard tableaux.}

$$
\young(\bone45,268,37\thirteen,9\ten\fifteen,\eleven\fourteen,\twelv)
\to
\young(\hfil45,268,37\thirteen,9\ten\fifteen,\eleven\fourteen,\twelv)
\to
\young(245,\hfil68,37\thirteen,9\ten\fifteen,\eleven\fourteen,\twelv)
\to
\young(245,368,\hfil7\thirteen,9\ten\fifteen,\eleven\fourteen,\twelv)
\to
\young(245,368,7\hfil\thirteen,9\ten\fifteen,\eleven\fourteen,\twelv)
$$
$$
\to
\young(245,368,7\ten\thirteen,9\hfil\fifteen,\eleven\fourteen,\twelv)
\to
\young(245,368,7\ten\thirteen,9\fourteen\fifteen,\eleven\hfil,\twelv)
\to
\young(245,368,7\ten\thirteen,9\fourteen\fifteen,\eleven\sixteen,\twelv)
\to
\young(134,257,69\twelv,8\thirteen\fourteen,\ten\fifteen,\eleven)
$$

The dual-promotion path of the above example is $[ (1,1), (2,1), (3,1), (3,2), (4,2), (5,2) ]$.

\end{example}

\begin{remark} \label{remark: promotion and dual-promotion path}
It is easy to see that $\partial^*=\partial^{-1}$; thus, they are both bijections
on $SYT(\lambda)$. Moreover, the the promotion path of $T$ is the reverse of the 
dual-promotion path of $\partial(T)$. 
\end{remark}

\begin{definition}\label{definition: evacuation}

Given $T \in SYT(\lambda)$ for any $\lambda \vdash n$, the \textbf{evacuation} action 
on $T$, denoted by $\epsilon(T)$, is described in the following algorithm:

Let $T_0=T$ and $\lambda_0=\lambda$, and let $U$ be an ``empty'' tableau of shape $\lambda$.  
We will fill in the entries of $U$ to get $\epsilon(T)$.

\begin{enumerate}
\item Apply sliding to $T_k$.  The last box of the sliding path is an inside corner of
$\lambda_k$; call this box $(i_k,j_k)$.  Fill in the number $k$ in the $(i_k,j_k)$ box of $U$.

\item Remove $(i_k,j_k)$ from $\lambda_k$ to get $\lambda_{k+1}$, and remove the corresponding box
and entry from $T_k$ to get $T_{k+1}$.

\item Repeat steps (1) and (2) $n$ times until $\lambda_n = \emptyset$ and $U$ is completely filled. 
      Then define $\epsilon(T)=U$.

\end{enumerate}
\end{definition}

\begin{example}
The following is a ``slow motion'' demonstration of the above process, 
where the $T_k$ and $U$ have been condensed.  Bold entries indicate the current fillings of $U$.
$$
   T =  \young(138,24,59,6\bten,7)
\to
    \young(138,24,59,6\hfil,7)
\to
    \young(138,24,5\hfil,69,7)
\to
    \young(138,24,\hfil5,69,7)
\to
    \young(138,\hfil4,25,69,7)
\to
    \young(\hfil38,14,25,69,7)
\to
    \young(\boldone38,14,25,6\bnine,7)
$$
$$
\to
    \young(\boldone38,14,25,6\hfil,7)
\to
    \young(\boldone38,14,25,\hfil6,7)
\to
    \young(\boldone38,14,\hfil5,26,7)
\to
    \young(\boldone38,\hfil4,15,26,7)
\to
    \young(\boldone3\beight,\boldtwo4,15,26,7)
\to
    \young(\boldone3\hfil,\boldtwo4,15,26,7)
\to
    \young(\boldone\hfil3,\boldtwo4,15,26,7)
$$
$$
\to
    \young(\boldone\boldthree3,\boldtwo4,15,26,\bseven)
\to
\cdots
\to
    \young(\boldone\boldthree\boldeight,\boldtwo\boldfive,\boldfour\boldsix,\boldseven\boldten,\boldnine) = \epsilon(T)
$$
\end{example}

\begin{remark} \label{remark: e-g convention}
The above definition of evacuation follows the convention of Edelman and Greene in \cite{e-g}. 
Stanley's ``evacuation'' \cite[A1.2.8]{ec2} would be our ``dual-evacuation'' defined below.  
\end{remark}

\begin{definition}\label{definition: dual-evacuation}
Given $T \in SYT(\lambda)$ for any $\lambda \vdash n$, the \textbf{dual-evacuation} of $T$, 
denoted by $\epsilon^*(T)$, is described in the following algorithm:

Let $T_0=T$ and $\lambda_0=\lambda$, and let $U$ be an ``empty'' tableau of shape $\lambda$.  
We will fill in the entries of $U$ to get $\epsilon^*(T)$.

\begin{enumerate}
\item Apply dual-sliding to $T_k$.  The last box of the dual-sliding path is an 
outside corner of $\lambda_k$; call this box $(i_k,j_k)$.  
Fill in the number $n+1-k$ in the $(i_k,j_k)$ box of $U$.

\item Remove $(i_k,j_k)$ from $\lambda_k$ to get $\lambda_{k+1}$, and remove the corresponding box
and entry from $T_k$ to get $T_{k+1}$.

\item Repeat steps (1) and (2) $n$ times until 
      $\lambda_n = \emptyset$ and $U$ is completely filled. 
      Then define $\epsilon^*(T)=U$.

\end{enumerate}
\end{definition}

\begin{example}
The following is a ``slow motion'' demonstration of the above process, 
where the $T_k$ and $U$ have been condensed.  Bold entries indicate the current fillings of $U$.
$$
   T =  \young(\bone38,24,59,6\ten,7)
\to
    \young(\hfil38,24,59,6\ten,7)
\to
    \young(238,\hfil4,59,6\ten,7)
\to
    \young(238,4\hfil,59,6\ten,7)
\to
    \young(238,49,5\hfil,6\ten,7)
\to
    \young(238,49,5\ten,6\hfil,7)
\to
    \young(\btwo38,49,5\ten,6\boldten,7)
$$
$$
\to
    \young(\hfil38,49,5\ten,6\boldten,7)
\to
    \young(3\hfil8,49,5\ten,6\boldten,7)
\to
    \young(38\hfil,49,5\ten,6\boldten,7)
\to
    \young(\bthree8\boldnine,49,5\ten,6\boldten,7)
\to
    \young(\hfil8\boldnine,49,5\ten,6\boldten,7)
\to
    \young(48\boldnine,\hfil9,5\ten,6\boldten,7)
\to
    \young(48\boldnine,59,\hfil\ten,6\boldten,7)
$$
$$
\to
    \young(48\boldnine,59,6\ten,\hfil\boldten,7)
\to
    \young(48\boldnine,59,6\ten,7\boldten,\hfil)
\to
    \young(48\boldnine,59,6\ten,7\boldten,\boldeight)
\to
\cdots
\to
    \young(\boldone\boldfour\boldnine,\boldtwo\boldfive,\boldthree\boldsix,\boldseven\boldten,\boldeight) = \epsilon^*(T)
$$
\end{example}

\begin{remark} \label{remark: RSK evacuation}
There is an equivalent definition of $\epsilon^*$ via the RSK algorithm \cite[A1.2.10]{ec2}.
(Recall that Stanley's ``evacuation'' is our ``dual-evacuation''.)
For a permutation $w=w_1w_2\cdots w_n \in \mathfrak{S}_n$ (in one-line notation), let 
$w^{\sharp} \in \mathfrak{S}_n$ be given by $$w^{\sharp}=(n+1-w_n)\cdots(n+1-w_2)(n+1-w_1).$$ 
For example, in the case $w=3547126$, $w^{\sharp}=2671435$.  The operation $w \to w^\sharp$ 
is equivalent to composing by the longest element in $\mathfrak{S}_n$.
Then if $w$ corresponds to $(P,Q)$ under RSK, $w^{\sharp}$ corresponds to 
$(\epsilon^*(P),\epsilon^*(Q))$ under RSK.
We are not aware of any RSK definition of $\epsilon$ for general shape $\lambda$.
\end{remark}


\begin{definition}\label{definition: descent}
For $T \in SYT(\lambda)$, $i$ is a \textbf{descent} of 
$T$ if $i+1$ appears strictly south of $i$ in $T$. The \textbf{descent set} of $T$,
denoted by $\Des(T)$, is the set of all descents of $T$. 
\end{definition}

\begin{example}
In the case that $T=\young(123,469,57,8)$, $\Des(T)=\{3,4,6,7\}$. 
\end{example}

\begin{remark} \label{remark: reading word descent}
Descent statistics were originally defined on permutations. For $\pi \in \mathfrak{S}_n$,
$i$ is a \textbf{right descent} of $\pi$ if $\pi(i)>\pi(i+1)$, and $i$ is a 
\textbf{left descent} of $\pi$ if $i$ is to the right of $i+1$ in the one-line notation
of $\pi$.

It is straightforward to check that left descents are preserved by Knuth equivalence. 
Therefore the descent set of any tableau $T$ is the set of left descents of any reading word 
of $T$.
\end{remark}

\subsection{Basic facts}
We list those basic facts of (dual-)promotion and (dual-)evacuation that we
will assume. If not specified otherwise, the following facts are about $SYT(\lambda)$
for general $\lambda \vdash n$. 

\begin{fact} \label{fact: involution}
$\epsilon$ and $\epsilon^*$ are involutions. 
\end{fact}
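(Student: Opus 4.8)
The plan is to show $\epsilon^2 = \mathrm{id}$ and $(\epsilon^*)^2 = \mathrm{id}$ on $SYT(\lambda)$ by exhibiting, for each tableau $T$, an explicit pairing between the data produced when computing $\epsilon(T)$ and the data produced when computing $\epsilon(\epsilon(T))$, and checking that the two computations undo one another box by box. I would first set up the natural bookkeeping: running the evacuation algorithm of Definition~\ref{definition: evacuation} on $T$ produces a sequence of shapes $\lambda = \lambda_0 \supset \lambda_1 \supset \cdots \supset \lambda_n = \emptyset$, a sequence of tableaux $T_0, T_1, \ldots, T_n$, and a sequence of inside corners $(i_0,j_0), (i_1,j_1), \ldots, (i_{n-1},j_{n-1})$, where $(i_k,j_k)$ receives the label $k$ in $U = \epsilon(T)$. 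The key structural observation is the well-known fact (Sch\"utzenberger) that one step of sliding on $T_k$ — removing the box containing the maximum entry, jeu-de-taquin-ing the hole to an inside corner, and relabelling — is a reversible operation whose inverse is one step of dual-sliding, so that the sequence $T_0, T_1, \dots$ can be recovered by reverse jeu-de-taquin from its terminal data.

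The main step is to identify $\epsilon$ with a composition that is manifestly an involution. I would use the standard description of evacuation in terms of jeu-de-taquin on the ``staircase-augmented'' or, equivalently, in terms of the fundamental symmetry $P(w^{-1}) = Q(w)$ combined with the reversal $w \mapsto w^\sharp$ from Remark~\ref{remark: RSK evacuation}: dual-evacuation $\epsilon^*$ corresponds under RSK to the operation $w \mapsto w^\sharp$, and since $(w^\sharp)^\sharp = w$ (composing twice by the longest element of $\mathfrak{S}_n$ is the identity), $\epsilon^*$ is an involution on $SYT(\lambda)$ for straight shapes $\lambda$. For $\epsilon$ itself, I would relate it to $\epsilon^*$ via the transpose: running the evacuation algorithm on $T$ and then on $\epsilon(T)$, the hole-moving paths at corresponding stages are reverses of one another (this is the promotion/dual-promotion reversal of Remark~\ref{remark: promotion and dual-promotion path}, applied at each truncation $\lambda_k$), which forces $T_k$ to be recovered exactly, and hence $\epsilon(\epsilon(T)) = T$.

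Concretely, the key steps in order: (1) fix $T$ and record the shape chain, tableau chain, and corner sequence from computing $\epsilon(T)$; (2) observe that the first sliding step in computing $\epsilon(\epsilon(T))$ uses the outside corner of $\lambda$ carrying the entry that was placed by the \emph{last} sliding step in computing $\epsilon(T)$, and that the jeu-de-taquin path of this step is the reverse of that last path (by Remark~\ref{remark: promotion and dual-promotion path}); (3) conclude that after this first step the truncated tableau agrees with $T_{n-1}$ restored, and the corner just vacated is $(i_{n-1}, j_{n-1})$, so that $U' = \epsilon(\epsilon(T))$ receives in that corner the entry $n-1$... one checks the index arithmetic works out to reproduce $T$'s entry there; (4) induct downward on $n$, since after removing this corner we are computing $\epsilon$ of a smaller tableau and the inductive hypothesis applies; (5) run the entirely parallel argument for $\epsilon^*$, or simply deduce it from $\epsilon^* = (\text{transpose}) \circ \epsilon \circ (\text{transpose})$ together with the fact that transposition of shapes commutes with the sliding procedure up to the obvious relabelling.

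The part I expect to be the real obstacle is step (3) — making the index bookkeeping airtight. The subtlety is that in computing $\epsilon(\epsilon(T))$ one relabels after each slide (subtracting/adding constants), so that the ``$n$'' whose box is removed at stage $k$ of the second computation is not literally the entry $n$ of the original $\epsilon(T)$ but a shifted image of it; one must carefully track which original cell this corresponds to and verify it is exactly the cell that received the largest label in the first computation. Once this correspondence ``the $k$-th corner of the second run $=$ the $(n-1-k)$-th corner of the first run, with matching jeu-de-taquin paths'' is nailed down, the involution property falls out by the downward induction, and the whole argument is essentially Sch\"utzenberger's; I would cite \cite{schutz1, schutz2, schutz3} or \cite{e-g} for the clean statement and keep the proof here brief.
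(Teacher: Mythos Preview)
The paper does not actually prove Fact~\ref{fact: involution}; it records it as a classical result and cites Sch\"utzenberger~\cite{schutz1,schutz2} (with Haiman~\cite{haiman} for alternative proofs). So there is no paper proof to compare against, only your attempted proof to evaluate on its own.

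Your argument for $\epsilon^*$ via RSK is correct and clean: Remark~\ref{remark: RSK evacuation} says $\epsilon^*$ on the $P$- and $Q$-tableaux corresponds to $w\mapsto w^\sharp$, and $(w^\sharp)^\sharp=w$ since $w^\sharp = w_0 w w_0$ for $w_0$ the longest element. That settles $(\epsilon^*)^2=\mathrm{id}$ on straight shapes.

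Your argument for $\epsilon$, however, has a genuine error at step~(2). In $U=\epsilon(T)$ the entry $n$ sits at $(i_{n-1},j_{n-1})$, the box filled by the \emph{last} step of the first run; but that last step acts on the single-box tableau $T_{n-1}$, so its sliding path is trivial. The first sliding step of $\epsilon(U)$, by contrast, slides a hole from the outside corner $(i_{n-1},j_{n-1})$ all the way across $\lambda$ to an inside corner --- a long path, not the reverse of a trivial one. So the pairing ``step $k$ of the second run reverses step $n-1-k$ of the first run'' is simply false at the level of individual jeu-de-taquin paths, and the induction in steps~(3)--(4) does not get off the ground. The fallback you offer in step~(5), namely $\epsilon^* = (\text{transpose})\circ\epsilon\circ(\text{transpose})$, is also incorrect: transposition commutes with \emph{each} of $\epsilon$ and $\epsilon^*$ separately (sliding on $T^t$ is the transpose of sliding on $T$), so conjugating $\epsilon$ by transpose gives back $\epsilon$, not $\epsilon^*$. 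The identity $\epsilon^*(S)=\epsilon(S)^t$ in Fact~\ref{fact: transpose} is special to self-conjugate (staircase) shapes.

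The standard route to $\epsilon^2=\mathrm{id}$ that does work combinatorially is Fomin's growth-diagram description: encode $T$ as a saturated chain of shapes along one edge of a triangular (or staircase) array, fill the array by the local jeu-de-taquin rule, and read $\epsilon(T)$ along the other edge. The local rule is symmetric in its two ``input'' shapes, so swapping the two edges leaves the array invariant, giving $\epsilon(\epsilon(T))=T$. Alternatively, Haiman~\cite{haiman} proves $\epsilon^2=\mathrm{id}$ directly from the presentation of $\epsilon$ as a product of elementary promotion operators satisfying braid-like relations. Either of these would be an appropriate reference or sketch here; your RSK argument handles $\epsilon^*$ but you still need one of them (or Fact~\ref{fact: dihedral two} together with the already-established $(\epsilon^*)^2=\mathrm{id}$ and a separate argument that $\partial^n$ commutes appropriately) to close the gap for $\epsilon$.
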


\begin{fact} \label{fact: dihedral one}
$\epsilon\circ\partial = \partial^*\circ\epsilon$ and 
$\epsilon^*\circ\partial = \partial^*\circ\epsilon^*$. 
\end{fact}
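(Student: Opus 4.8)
\textbf{Proof plan for Fact~\ref{fact: dihedral one}.}

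The plan is to reduce both identities to a single algebraic fact about the action of the symmetric group generated by promotion and evacuation on linear extensions of a poset, as developed by Sch\"utzenberger and recorded in Haiman~\cite{haiman} and Stanley~\cite{RS:2008}. The key structural statement is that on $SYT(\lambda)$ (viewed as linear extensions of the poset of the diagram of $\lambda$), evacuation $\epsilon$ and dual-evacuation $\epsilon^*$ are involutions, and conjugation by $\epsilon$ inverts promotion: $\epsilon \circ \partial \circ \epsilon = \partial^{-1} = \partial^*$ (using Remark~\ref{remark: promotion and dual-promotion path}). Granting this, the first identity $\epsilon\circ\partial = \partial^*\circ\epsilon$ is immediate: compose $\epsilon\circ\partial\circ\epsilon = \partial^*$ on the right by $\epsilon$ and use Fact~\ref{fact: involution} ($\epsilon^2 = \mathrm{id}$) to get $\epsilon\circ\partial = \partial^*\circ\epsilon$.

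For the second identity $\epsilon^*\circ\partial = \partial^*\circ\epsilon^*$, I would first establish the companion conjugation formula $\epsilon^*\circ\partial\circ\epsilon^* = \partial^*$. The cleanest route is to relate $\epsilon^*$ to $\epsilon$ and $\partial$: one has $\epsilon^* = \partial^{n} \circ \epsilon$ on $SYT(\lambda)$ with $|\lambda| = n$ is \emph{not} quite right in general, so instead I would use the transpose/rotation description, or better, work directly from the sliding-path definitions. Concretely, dual-evacuation is obtained from evacuation by replacing every ``sliding'' step with ``dual-sliding'' and reading entries in the opposite order; since dual-sliding on $T$ is the path-reversal of sliding on a suitably reflected tableau, $\epsilon^*$ and $\epsilon$ are conjugate by the ``$\sharp$'' operation of Remark~\ref{remark: RSK evacuation} (for $SYT$ arising as recording tableaux this is literally the $w \mapsto w^\sharp$ symmetry). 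The point is that the group $\langle \partial, \epsilon, \epsilon^* \rangle$ acting on $SYT(\lambda)$ is dihedral, with $\epsilon, \epsilon^*$ both reflections and $\partial$ the rotation, so \emph{any} reflection conjugates the rotation to its inverse; hence $\epsilon^*\circ\partial\circ\epsilon^* = \partial^{-1} = \partial^*$ as well. Composing on the right by $\epsilon^*$ and invoking Fact~\ref{fact: involution} gives $\epsilon^*\circ\partial = \partial^*\circ\epsilon^*$.

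The main obstacle is making the dihedral-group claim rigorous without simply citing it as a black box. If a self-contained argument is wanted, I would prove $\epsilon\circ\partial\circ\epsilon = \partial^*$ by the standard ``growth diagram'' or ``Fomin'' bookkeeping: track, through the $n$ sliding operations that define $\epsilon(T)$, how an extra application of promotion interacts with the removal of the maximal entry, and check the paths match those produced by $\partial^*$ applied to $\epsilon(T)$. This is essentially Sch\"utzenberger's original lemma and is somewhat technical, involving careful indexing of which box is vacated at each stage; the bijective content is that promotion ``commutes past'' each evacuation slide after an index shift. Once the conjugation formulas $\epsilon\partial\epsilon = \partial^*$ and $\epsilon^*\partial\epsilon^* = \partial^*$ are in hand, the two displayed identities of the Fact follow formally in one line each, so I would spend essentially all the effort on those two conjugation identities and present the rest as immediate consequences of Fact~\ref{fact: involution}.
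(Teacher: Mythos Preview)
The paper does not give its own proof of Fact~\ref{fact: dihedral one}; it simply records the identities as due to Sch\"utzenberger, with alternative proofs in Haiman~\cite{haiman}. So there is no ``paper's proof'' to compare against beyond that citation, and your plan---reduce to the conjugation identity $\epsilon\,\partial\,\epsilon=\partial^{-1}=\partial^*$ and then invoke Fact~\ref{fact: involution}---is exactly the content of those references.

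Two small comments on your write-up. First, your hesitation about $\epsilon^*=\partial^{n}\circ\epsilon$ is well placed: the correct general relation (this is Fact~\ref{fact: dihedral two}) is $\epsilon\circ\epsilon^*=\partial^{n}$, i.e.\ $\epsilon^*=\epsilon\circ\partial^{n}=\partial^{-n}\circ\epsilon$, not $\partial^{n}\circ\epsilon$. But you should not lean on Fact~\ref{fact: dihedral two} here anyway, since in the cited sources it is proved alongside (not before) Fact~\ref{fact: dihedral one}. Second, the cleanest non-circular route to the $\epsilon^*$ identity is the built-in duality: the definitions of $(\epsilon^*,\partial^*)$ are obtained from those of $(\epsilon,\partial)$ by replacing every sliding step by a dual-sliding step (equivalently, by reversing the partial order on the cells of $\lambda$). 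Hence the \emph{same} argument that yields $\epsilon\,\partial\,\epsilon=\partial^*$ yields, verbatim under this duality, $\epsilon^*\,\partial^*\,\epsilon^*=\partial$; taking inverses and using that $\epsilon^*$ is an involution gives $\epsilon^*\,\partial\,\epsilon^*=\partial^*$, and composing on the right by $\epsilon^*$ finishes. This avoids the somewhat circular appeal to an abstract ``dihedral'' structure and keeps the two halves of the Fact on equal footing.
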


\begin{fact} \label{fact: dihedral two}
$\epsilon\circ\epsilon^* = \partial^n$.
\end{fact}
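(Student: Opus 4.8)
The plan is to first peel off the part of the statement that follows formally from what we already have, and then attack the genuinely combinatorial core. Recalling from Remark~\ref{remark: promotion and dual-promotion path} that $\partial^*=\partial^{-1}$, Fact~\ref{fact: dihedral one} becomes $\epsilon\partial\epsilon=\partial^{-1}$ and $\epsilon^*\partial\epsilon^*=\partial^{-1}$; hence $(\epsilon\epsilon^*)\,\partial\,(\epsilon\epsilon^*)^{-1}=\epsilon(\epsilon^*\partial\epsilon^*)\epsilon=\epsilon\partial^{-1}\epsilon=\partial$, so $\epsilon\epsilon^*$ commutes with $\partial$. This is consistent with, but does not yet establish, the claim, since $\partial^n$ obviously also commutes with $\partial$. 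Using that $\epsilon$ is an involution (Fact~\ref{fact: involution}), the identity $\epsilon\circ\epsilon^*=\partial^n$ is equivalent to $\epsilon^*=\epsilon\circ\partial^n$, i.e.\ to the assertion that dual-evacuation equals $n$-fold promotion followed by evacuation. It is this equality I would prove directly, by induction on $n=|\lambda|$.

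The inductive engine is a reformulation of the (dual-)evacuation algorithms in terms of a single step of (dual-)promotion. Inspecting Definition~\ref{definition: evacuation}: the first round applied to $T$ removes the entry $n$, slides the hole northwest to its terminal inside corner $d$, records the value $1$ at $d$, and deletes $d$. Comparing with Definition~\ref{definition: promotion}, the slid tableau with the hole filled by $0$ is exactly the tableau underlying $\partial(T)$; deleting the cell $d$ (which, after the final ``$+1$'', is the cell of $\partial(T)$ containing $1$) and renormalizing the remaining entries yields a tableau $T_1$ on a skew shape, and one checks that $T_1$ is precisely $\partial(T)$ with its $1$-cell removed. Iterating, $\epsilon(T)$ is the tableau obtained by placing $1$ at $d$ and then placing the entries $\ge 2$ according to $\epsilon(T_1)$ (shifted). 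The same analysis applied to Definition~\ref{definition: dual-evacuation} and the dual form of Definition~\ref{definition: promotion}, together with Remark~\ref{remark: promotion and dual-promotion path}, gives the mirror statement: $\epsilon^*(T)$ places $n$ at the terminal outside corner of the first dual-slide --- which is the cell of $\partial^*(T)=\partial^{-1}(T)$ containing $n$ --- and places the remaining entries according to $\epsilon^*$ of $\partial^{-1}(T)$ with its $n$-cell removed. Feeding these two recursions into $\epsilon\circ\epsilon^*$ and into $\partial^n=\partial\circ\partial^{n-1}$ and invoking the inductive hypothesis on the $(n-1)$-cell tableaux --- after checking that the extremal entry lands correctly, which is where Remark~\ref{remark: promotion and dual-promotion path} does the work of aligning one promotion step with one dual-promotion step --- closes the induction; the base case $n=1$ is trivial.

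The main obstacle is exactly the bookkeeping in the previous paragraph: promotion does not restrict to subtableaux of subshapes in a transparent way --- deleting a corner turns a straight shape into a skew one --- so making the ``$T_1=\partial(T)$ with its $1$-cell removed'' claim precise, and tracking cells through the successive jeu-de-taquin slides, is the technical heart and must be carried out carefully. An alternative that sidesteps part of this uses Remark~\ref{remark: RSK evacuation}: it expresses $\epsilon^*$ on recording tableaux via the involution $w\mapsto w^{\sharp}$ (composition with the longest element of $\mathfrak{S}_n$), and combined with the known relation between promotion on a recording tableau and cyclic rotation of the associated word, the identity reduces to a direct check that the $n$-fold rotation matches conjugation by the longest element; the drawback is that the rotation description of promotion is itself somewhat delicate and Remark~\ref{remark: RSK evacuation} only supplies an RSK model for $\epsilon^*$, not for $\epsilon$, in general shape. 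In any case this is classical; see \cite{haiman} and the survey \cite{RS:2008} for complete treatments.
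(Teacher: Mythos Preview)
The paper does not prove Fact~\ref{fact: dihedral two} at all: it is listed among the basic facts and immediately attributed to Sch\"utzenberger~\cite{schutz1,schutz2}, with a pointer to alternative proofs in Haiman~\cite{haiman}. So there is no in-paper argument to compare your proposal to; you are supplying strictly more than the paper does.

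As for your sketch itself: the reduction to $\epsilon^*=\epsilon\circ\partial^n$ and the observation that $\epsilon\epsilon^*$ commutes with $\partial$ are fine. The inductive plan via ``peel off one cell and recurse on a skew shape'' is plausible in spirit, but you correctly flag the real difficulty and do not resolve it: once you delete the $1$-cell (or the $n$-cell) you are on a skew shape, and promotion on that skew shape is not the restriction of promotion on the original straight shape, so the inductive hypothesis does not apply directly to the object you have produced. Making this work requires either a careful dual-equivalence argument or a reformulation. The proofs in the references you cite (\cite{haiman}, \cite{RS:2008}) take a cleaner route: they express $\partial$, $\epsilon$, and $\epsilon^*$ as explicit words in the elementary involutions $t_i$ (the local jeu-de-taquin/Bender--Knuth type moves), namely $\partial=t_{n-1}\cdots t_1$, $\epsilon=t_1(t_2t_1)\cdots(t_{n-1}\cdots t_1)$, and the mirror expression for $\epsilon^*$, after which $\epsilon\epsilon^*=\partial^n$ is a word identity using only $t_i^2=1$ --- no skew shapes, no tracking of slides. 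If you want a self-contained proof rather than a citation, that is the argument to write out; your recursive approach, as it stands, is an outline with its central gap acknowledged but not closed.
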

The above results are due to Sch\"utzenberger~\cite{schutz1, schutz2}. Alternative proofs are given
by Haiman in ~\cite{haiman}.

\begin{fact} \label{fact: promotion on rectangle} 
For any $R \in SYT(c^r)$, let $n=|c^r|=r\cdot c$.  Then $\partial^n(R) = R$. 
\end{fact}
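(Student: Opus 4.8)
The plan is to prove Fact~\ref{fact: promotion on rectangle} by combining the three Sch\"utzenberger identities (Facts~\ref{fact: involution}, \ref{fact: dihedral one}, \ref{fact: dihedral two}) with one special feature of rectangular shapes. First I would recall that Fact~\ref{fact: dihedral two} gives $\partial^n = \epsilon \circ \epsilon^*$, so it suffices to show that $\epsilon$ and $\epsilon^*$ coincide as maps on $SYT(c^r)$; then $\partial^n = \epsilon^2 = \mathrm{id}$ by Fact~\ref{fact: involution}. So the whole problem reduces to the claim that for rectangular shapes, evacuation equals dual-evacuation.

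To see that $\epsilon = \epsilon^*$ on $SYT(c^r)$, I would exploit the symmetry of the rectangle under the $180^\circ$ rotation that sends box $(i,j)$ to box $(r+1-i, c+1-j)$. Given $T \in SYT(c^r)$, form the tableau $T^{\mathrm{rot}}$ by rotating $T$ by $180^\circ$ and replacing each entry $m$ by $n+1-m$; because the rectangle is carried to itself by this rotation and the order of entries is reversed by $m \mapsto n+1-m$, $T^{\mathrm{rot}}$ is again a standard Young tableau of shape $c^r$. The key observations are: (a) this rotation operation is an involution on $SYT(c^r)$; (b) dual-sliding on $T$ corresponds, under the rotation, exactly to sliding on $T^{\mathrm{rot}}$ — removing the box containing $1$ and pushing the hole southeast in $T$ is the rotated mirror image of removing the box containing $n$ and pushing the hole northwest in $T^{\mathrm{rot}}$; and (c) tracking how the filling of $U$ is built in Definitions~\ref{definition: evacuation} and~\ref{definition: dual-evacuation}, one gets $\epsilon^*(T) = (\epsilon(T^{\mathrm{rot}}))^{\mathrm{rot}}$ for any shape. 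For a rectangle, the rotation is itself an evacuation-type symmetry, and one can check that it commutes appropriately with $\epsilon$; more directly, it is a classical fact (traceable to Sch\"utzenberger, and reproved by Haiman~\cite{haiman}) that the rotation map equals $\epsilon^* \circ \epsilon = \partial^{-n}$ in general, so on the rectangle, where the rotation is shape-preserving, chasing these identities yields $\epsilon = \epsilon^*$ and hence $\partial^n = \mathrm{id}$.

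Alternatively — and this is probably the cleaner route to write up — I would use Fact~\ref{fact: dihedral one}: since $\epsilon \circ \partial = \partial^* \circ \epsilon$, iterating gives $\epsilon \circ \partial^n = (\partial^*)^n \circ \epsilon = \partial^{-n}\circ\epsilon$, so $\partial^n = \mathrm{id}$ is equivalent to $\partial^{-n} = \mathrm{id}$, which we already knew; that does not immediately close the loop. The honest input that distinguishes the rectangle is the rotational symmetry, so I would commit to the argument of the previous paragraph: establish (c) as a shape-independent lemma relating $\epsilon^*$ to $\epsilon$ via $180^\circ$ rotation of the shape, then observe that for $\lambda = c^r$ the rotated shape is $\lambda$ itself and that the rotation map $T \mapsto T^{\mathrm{rot}}$ equals $\epsilon \circ \epsilon^*$ (equivalently $\partial^n$) on any shape — a result of Sch\"utzenberger — so on the rectangle $\partial^n = T\mapsto T^{\mathrm{rot}}$ is an involution, giving $\partial^{2n} = \mathrm{id}$; to upgrade to $\partial^n = \mathrm{id}$ one then notes $\partial^n = \epsilon\circ\epsilon^*$ and that rotation sends $\epsilon$ to $\epsilon^*$, forcing $\epsilon = \epsilon^*$ on $c^r$.

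The main obstacle I anticipate is bookkeeping: verifying cleanly that $180^\circ$-rotation-with-complementation intertwines sliding and dual-sliding, and that it does so compatibly with the step-by-step filling of the output tableau $U$ in Definitions~\ref{definition: evacuation} and~\ref{definition: dual-evacuation}, so that $\epsilon^*(T) = \big(\epsilon(T^{\mathrm{rot}})\big)^{\mathrm{rot}}$ holds on the nose. Once that intertwining lemma is in hand, specializing to $\lambda = c^r$ is immediate because the rectangle is the one family of straight shapes fixed by the rotation, and the conclusion $\partial^n = \mathrm{id}$ drops out of Fact~\ref{fact: dihedral two} and Fact~\ref{fact: involution}. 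I would cite Haiman~\cite{haiman} and Sch\"utzenberger~\cite{schutz1, schutz2} for the underlying identities rather than reprove them, keeping the write-up to the rotation argument plus the short deduction.
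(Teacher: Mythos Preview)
The paper does not prove Fact~\ref{fact: promotion on rectangle}; it simply records it and attributes it to Sch\"utzenberger. So there is no argument in the paper to compare against. Your reduction via Fact~\ref{fact: dihedral two} to showing $\epsilon=\epsilon^*$ on $SYT(c^r)$ is valid, though note that this is exactly the reverse of the paper's logical order: Fact~\ref{fact: ev and dev on rect tab} is deduced \emph{from} Fact~\ref{fact: promotion on rectangle}, not the other way around, so you cannot cite it.

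Your attempt to establish $\epsilon=\epsilon^*$ directly has a genuine gap at the decisive step. The identity you label (c), namely $\epsilon^*(T)=\bigl(\epsilon(T^{\mathrm{rot}})\bigr)^{\mathrm{rot}}$, is correct; specialized to the rectangle it says $\epsilon^* = \mathrm{rot}\circ\epsilon\circ\mathrm{rot}$, i.e., $\epsilon$ and $\epsilon^*$ are \emph{conjugate} by $\mathrm{rot}$. That alone does not force $\epsilon=\epsilon^*$. To close the gap you invoke a ``classical fact'' that $\mathrm{rot}=\epsilon\circ\epsilon^*=\partial^{\pm n}$; this is \emph{false}, and in fact contradicts your goal, since $\epsilon=\epsilon^*$ would then give $\mathrm{rot}=\epsilon\circ\epsilon^*=\mathrm{id}$. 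The correct classical statement is that on a rectangle $\mathrm{rot}=\epsilon^*$ (equivalently $\mathrm{rot}=\epsilon$); plugging that into (c) does yield $\epsilon=\epsilon^*$ immediately. But proving $\mathrm{rot}=\epsilon^*$ on $SYT(c^r)$ is not the ``bookkeeping'' you anticipate: it is the entire content of the theorem, and requires a real jeu-de-taquin or RSK argument (as in Sch\"utzenberger or Haiman~\cite{haiman}). Your proposal never supplies this step --- it either misstates it or assumes it --- so as written the argument is circular.
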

The above result is often attributed to Sch\"utzenberger.

\begin{fact}\label{fact: ev and dev on rect tab}
On rectangular tableaux, $\epsilon$ = $\epsilon^*$. 
\end{fact}
The above result is an easy consequence of Fact~\ref{fact: dihedral two} and 
Fact~\ref{fact: promotion on rectangle}. 

\begin{fact} \label{fact: promotion on staircase} 
For any $S \in SYT(sc_k)$, let $n=|sc_k|=(k+1)\cdots k/2$.  Then $\partial^{2n}(S) = S$ and
$\partial^n(S) = S^t$, where $S^t$ is the transpose of $S$. 
\end{fact}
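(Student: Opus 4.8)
The plan is to prove only the identity $\partial^{n}(S)=S^{t}$; the statement $\partial^{2n}(S)=S$ then follows at once, since $sc_k$ is self-conjugate, so $S\mapsto S^{t}$ is an involution of $SYT(sc_k)$, and $\partial^{2n}(S)=\partial^{n}(S^{t})=(S^{t})^{t}=S$.

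First I would translate $\partial^{n}(S)=S^{t}$ into the language of evacuation. By Fact~\ref{fact: dihedral two}, $\partial^{n}=\epsilon\circ\epsilon^{*}$, and by Fact~\ref{fact: involution}, $\epsilon$ is an involution; hence the goal is equivalent to $\epsilon^{*}(S)=\epsilon(S^{t})$. Next comes the elementary remark that promotion, dual-promotion, evacuation and dual-evacuation are all \emph{equivariant under transposition of tableaux}: reflecting a tableau across its main diagonal interchanges the north/south neighbours of a cell with its west/east neighbours, while the jeu-de-taquin rules driving all four operations (``move the larger of the north and west neighbours into the hole'', and dually ``move the smaller of the south and east neighbours into the hole'') are symmetric in the two neighbours involved, and the relabelling steps commute with transposition trivially. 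In particular $\epsilon(S^{t})=(\epsilon(S))^{t}$, so the goal becomes
\[
  \epsilon^{*}(S)=(\epsilon(S))^{t}\qquad\text{for all }S\in SYT(sc_k):
\]
on staircase tableaux, dual-evacuation equals evacuation followed by transposition.

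This identity is the heart of the matter, and it is exactly where the special combinatorics of the staircase must enter (for a general self-conjugate $\lambda$ there is no reason to expect it, and the analogous assertion fails for most shapes). I would prove it through one of the two classical linearizations of $SYT(sc_k)$. One option is Haiman's \emph{dual-equivalence} calculus, in which $SYT(sc_k)$ is a single dual-equivalence class, the powers of promotion and the evacuations are computed from canonical dual equivalences, and the transpose is the only nontrivial symmetry of the Young diagram of $sc_k$, which pins $\partial^{n}=\epsilon\epsilon^{*}$ down to that transpose. The other is Edelman and Greene's bijection between $SYT(sc_k)$ and the reduced words for the longest element of $\mathfrak{S}_{k+1}$, under which transposition of tableaux corresponds to reversal of reduced words; one then tracks the action of $\epsilon$ and $\epsilon^{*}$ directly on reduced words. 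A third, more computational route is induction on $k$, using that $sc_k$ has a unique inside corner --- so the entry $1$ of any $S$ always sits at $(1,1)$ --- to peel $sc_k$ down to $sc_{k-1}$ and follow a promotion path through the recursion; here one must be careful, since the naive reduction via $180^{\circ}$ rotation relates $\epsilon$ on $sc_k$ to dual-evacuation on the \emph{skew} shape $sc_k^{\vee}$, which is not directly $\epsilon^{*}$ on $sc_k$.

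Everything other than the displayed identity is routine: the reduction of $\partial^{2n}$ to $\partial^{n}$, the rewriting via $\partial^{n}=\epsilon\epsilon^{*}$, and the transpose-equivariance of the four operators. The one real obstacle is establishing the asymmetry $\epsilon^{*}=(\text{transpose})\circ\epsilon$ on $SYT(sc_k)$, for which I would favour the dual-equivalence approach. I note, finally, that the otherwise attractive shortcut of embedding $SYT(sc_k)$ promotion-equivariantly into a rectangle and invoking Fact~\ref{fact: promotion on rectangle} is not available here without circularity, since the only embedding of this kind we have is the one constructed later in this paper, whose analysis presupposes the order of promotion on staircase tableaux.
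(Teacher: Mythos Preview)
The paper does not actually prove this statement: it is recorded as a basic fact with the attribution ``The above result is due to Edelman and Greene \cite{e-g}.'' So there is no in-paper argument to compare against; the relevant benchmark is the Edelman--Greene proof, which proceeds via their bijection between $SYT(sc_k)$ and reduced words for the longest element of $\mathfrak{S}_{k+1}$.

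Your reduction is sound, but notice that the displayed identity $\epsilon^{*}(S)=(\epsilon(S))^{t}$ at which you arrive is precisely Fact~\ref{fact: transpose} of the paper, and the paper derives that fact \emph{from} Fact~\ref{fact: promotion on staircase} (together with Facts~\ref{fact: involution} and~\ref{fact: dihedral two}). In other words, your ``routine'' steps show exactly that Fact~\ref{fact: promotion on staircase} and Fact~\ref{fact: transpose} are equivalent modulo the general Sch\"utzenberger relations; you have rewritten the problem, not reduced its difficulty. This is fine, and you are right to flag that the real content lies in establishing $\epsilon^{*}=(\text{transpose})\circ\epsilon$ on $SYT(sc_k)$ directly.

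The gap is that you do not actually carry out any of the three routes you list for that key identity. Option~2 (reduced words) is essentially the original Edelman--Greene argument, so invoking it amounts to citing \cite{e-g}, which is what the paper does. Option~1 (dual equivalence) can be made to work, but your sketch is too thin: the assertion that $SYT(sc_k)$ forms a single dual-equivalence class and that ``the transpose is the only nontrivial symmetry \ldots\ which pins $\partial^{n}$ down'' is not a proof---one still has to explain why $\partial^{n}$ must realize a diagram symmetry at all, which is again the nontrivial step. Option~3, as you yourself note, runs into the skew-shape issue. So as written this is a correct reformulation together with a menu of plausible strategies, not a proof; to complete it you would need to execute one of them in full, and the most honest version of that is the Edelman--Greene reduced-word argument.

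Your closing remark about avoiding circularity via the embedding $\iota$ is well taken and matches the paper's logical dependencies.
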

The above result is due to Edelman and Greene \cite{e-g}.

\begin{fact} \label{fact: transpose}
For any $S \in SYT(sc_k)$, $\epsilon^*(S) = \epsilon(S)^t$.
\end{fact}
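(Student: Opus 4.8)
The plan is to deduce the identity $\epsilon^*(S)=\epsilon(S)^t$ purely formally from the relations in Facts~\ref{fact: involution}, \ref{fact: dihedral one}, \ref{fact: dihedral two} and~\ref{fact: promotion on staircase}, with no direct analysis of the evacuation algorithm. Throughout, write $n=|sc_k|=k(k+1)/2$. Since $sc_k$ is self-conjugate, transposition $T\mapsto T^t$ is an involution of $SYT(sc_k)$, and Fact~\ref{fact: promotion on staircase} says precisely that this involution coincides with $\partial^{n}$ on $SYT(sc_k)$, while $\partial^{2n}$ is the identity there (so in particular $\partial^{-n}=\partial^{n}$ as maps on $SYT(sc_k)$).

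First I would record two bookkeeping consequences of the general facts. From Fact~\ref{fact: dihedral one} together with $\partial^*=\partial^{-1}$ (Remark~\ref{remark: promotion and dual-promotion path}) we get $\epsilon\circ\partial=\partial^{-1}\circ\epsilon$; iterating this conjugation gives $\epsilon\circ\partial^{m}=\partial^{-m}\circ\epsilon$ for every $m\in\mathbb{Z}$, in particular $\partial^{n}\circ\epsilon=\epsilon\circ\partial^{-n}$. From Fact~\ref{fact: dihedral two} ($\epsilon\circ\epsilon^*=\partial^{n}$) together with Fact~\ref{fact: involution} ($\epsilon^{-1}=\epsilon$) we get $\epsilon^*=\epsilon\circ\partial^{n}$.

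Now the computation runs as follows. Since $\epsilon(S)\in SYT(sc_k)$, Fact~\ref{fact: promotion on staircase} lets me replace its transpose by $\partial^{n}$, and then I push $\partial^{n}$ through $\epsilon$ and use $\partial^{-n}=\partial^{n}$ and $\partial^{n}(S)=S^t$:
\[
\epsilon(S)^t=\partial^{n}\bigl(\epsilon(S)\bigr)=\epsilon\bigl(\partial^{-n}(S)\bigr)=\epsilon\bigl(\partial^{n}(S)\bigr)=\epsilon(S^t).
\]
On the other hand, using $\epsilon^*=\epsilon\circ\partial^{n}$ and again $\partial^{n}(S)=S^t$,
\[
\epsilon^*(S)=\epsilon\bigl(\partial^{n}(S)\bigr)=\epsilon(S^t).
\]
Comparing the two displays gives $\epsilon^*(S)=\epsilon(S)^t$, which is the claim.

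Along this route there is essentially no obstacle; the only point needing care is that each invocation of Fact~\ref{fact: promotion on staircase} is made to a tableau that genuinely lies in $SYT(sc_k)$ — here $S$, $\epsilon(S)$ and $\partial^{\pm n}(S)$ all do. I would not attempt a self-contained combinatorial proof instead: the genuinely hard statement in that direction is that, on a self-conjugate shape, the evacuation algorithm intertwines with transposition, and that content is exactly what Facts~\ref{fact: dihedral two} and~\ref{fact: promotion on staircase} (due to Sch\"utzenberger and to Edelman--Greene) already package for us.
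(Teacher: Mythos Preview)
Your proof is correct and follows essentially the same formal deduction from the dihedral relations that the paper indicates. The only difference is that you also invoke Fact~\ref{fact: dihedral one}, whereas the paper cites only Facts~\ref{fact: involution}, \ref{fact: dihedral two}, and~\ref{fact: promotion on staircase}; indeed, inverting $\epsilon\circ\epsilon^*=\partial^n$ (both factors being involutions) already gives $\epsilon^*=\partial^{-n}\circ\epsilon$, so $\epsilon^*(S)=\partial^{-n}(\epsilon(S))=\partial^{n}(\epsilon(S))=\epsilon(S)^t$ without the detour through $\epsilon(S^t)$.
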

The above result is an easy consequence of Fact~\ref{fact: involution}, 
Fact~\ref{fact: dihedral two}, and Fact~\ref{fact: promotion on staircase}.

\section{The embedding of $SYT(sc_k)$ into $SYT(k^{(k+1)})$} \label{section: the embedding}

In this section we describe the embedding $\iota: SYT(sc_k) \to SYT(k^{(k+1)})$.

\begin{definition}
Given $S \in SYT(sc_k)$, let $N=(k+1)\cdots k$.  Construct $R=\iota(S)$ as follows: 
\begin{itemize}
  \item $R[i,j]=S[i,j]$ for 
        $i+j \le k+1$ 
        (northwest (upper) staircase portion).
  \item $R[i,j]=N+1-\epsilon(T)[k+2-i,k+1-j]$ for 
        $i+j > k+1$
        (southeast (lower) staircase portion).
\end{itemize}
\end{definition}

This amounts to the following visualization:
\begin{example}
Let $S=\young(126,35,4)$; then $\epsilon(S)=\young(145,26,3)$. Rotating $\epsilon(S)$  
by $\pi$, we get $\young(::3,:62,541)$. Now we take the complement of each filling by $N+1 =
13$ and get $S'=\young(::\ten,:7\eleven,89\twelv)$. 

There is an obvious way to put $S$ and $S'$ together to create a standard tableau of shape
$3^4$, which is $\iota(t)=\young(126,35\ten,47\eleven,89\twelv)$.    
\end{example}

\begin{remark} \label{remark: equivalent formulation of iota}
Recall that $\epsilon^*(S)=\epsilon(S)^t$. Thus we could have computed
$\epsilon^*(S)=\young(123,46,5)$, and flipped it along the staircase diagonal to get 
$\young(::3,:62,541)$, which is the same as rotating $\epsilon(S)$ by $\pi$. This point of view 
manifests the fact that $n \in \Des(\iota(S))$ (Definition~\ref{definition: extended descent}) if and 
only if the corner of $n$ in $\epsilon^*(S)$ is southeast of the corner of $n$ in $S$.  

It is also an arbitrary choice to embed $SYT(sc_k)$ into $SYT(k^{(k+1)})$ instead of
into $SYT((k+1)^k)$. For example, we could have put together the above $S$ and $S'$ to form 
$$
\young(126\ten,357\eleven,489\twelv).
$$ 

Our arguments below apply to either choice with little modification.
\end{remark}


From the construction of $\iota$, we see that $\iota(S)$ contains the 
upper staircase portion, which is just $S$, and the lower staircase portion, which 
is essentially $\epsilon(S)$. Therefore, we can just identify $\iota(S)$ 
with the pair $(S, \epsilon(S))$. We would like to understand how the promotion
action on $\iota(S)$ factors through this identification. It is clear from the
construction that promotion on $\iota(S)$, when restricted to the lower staircase
portion, corresponds to dual-promotion on $\epsilon(S)$. If the promotion 
path in $\iota(S)$ passes through the box containing $n = (k+1)\cdots k/2$ (the largest
number in the upper staircase portion of $\iota(S)$), then we know that promotion
on $\iota(S)$, when restricted to the upper staircase portion, corresponds to
promotion on $S$. The following arguments show that this is indeed the case.






\begin{lemma} \label{lemma: key observation}
Let $T \in STY(\lambda)$, and $n=|\lambda|$. If the number $n$ is in box $(i,j)$ of
$T$ (clearly, it must be an outside corner), then the dual-promotion path of $\epsilon^*(T)$ 
ends on box $(i,j)$ of $\epsilon^*(T)$. 
\end{lemma}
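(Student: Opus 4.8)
The plan is to unwind the definitions of evacuation and dual-evacuation and track where the box containing $n$ goes. Recall that $\epsilon^*$ is built by repeatedly applying dual-sliding: at step $k$ (for $k=0,1,\ldots$) we dual-slide $T_k$, record $n+1-k$ in the outside corner where the dual-sliding path terminates, and delete that box. So the very first thing dual-evacuation does to $T=T_0$ is dual-slide once: the box containing $1$ is vacated and the hole travels southeast to some outside corner, where we then place the value that will become $n$ after the global relabeling at the end. The key point is that this first dual-slide of $T$ is exactly the dual-promotion slide on $T$, so by definition the hole ends at the terminus of the dual-promotion path of $T$.

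First I would make precise the relationship between $\epsilon^*(T)$ and $\partial^*(T)$ that is implicit in the algorithms. The cleanest route is to use Fact~\ref{fact: dihedral one}, which gives $\epsilon^*\circ\partial = \partial^*\circ\epsilon^*$, equivalently $\epsilon^*\circ\partial^* = (\partial^*)^{-1}\circ\epsilon^* \cdot(\text{something})$ — more usefully, I would instead argue directly from the two algorithms. Both $\epsilon^*$ and $\partial^*$ begin by removing the entry $1$ and dual-sliding the resulting hole to an outside corner; in $\partial^*$ we put $n+1$ there and shift down, in $\epsilon^*$ we record $n$ (after relabeling) there and recurse on the smaller shape. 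Either way, the first dual-sliding path is the same path, and its terminal box — call it $(a,b)$ — is where $\epsilon^*(T)$ receives the value $n$ (since $n+1-0 = n+1$... wait: at $k=0$ we place $n+1-0 = n+1$; I should double check the indexing, but the largest value placed is $n$, occurring at the outside corner reached by the first dual-slide; if the convention places $n+1$ I would note the harmless off-by-one and that what matters is it is the \emph{maximal} entry). Hence the box of $n$ in $\epsilon^*(T)$ is the terminus of the dual-promotion path of $T$.

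Next I would connect this to the box $(i,j)$ of $T$ containing $n$. This is precisely Remark~\ref{remark: promotion and dual-promotion path}: the promotion path of a tableau is the reverse of the dual-promotion path of its promotion image, equivalently the dual-promotion path of $T$ is the reverse of the promotion path of $\partial^*(T)$. But more directly: applying $\partial^*$ to $T$ and then $\partial$ returns $T$, and the promotion slide on $\partial^*(T)$ removes the box now containing the largest entry — which sits at $(a,b)$, the terminus of the dual-slide — and slides it back northwest, ending at the inside corner vacated at the start. So I want the reverse statement: I should instead note that dual-promotion on $T$ \emph{creates} the new outside corner at $(a,b)$, but I need to locate where $n$ \emph{was} in $T$. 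The resolution: run the inverse. Since $\partial(\partial^*(T)) = T$ and promotion removes the maximal entry's box and slides it to the inside corner, the box $(i,j)$ of $T$ holding $n$ is exactly the box that the promotion slide on $\partial^*(T)$... no — I think the correct and simplest formulation is that Lemma's claim is equivalent to: \emph{the dual-promotion path of $\epsilon^*(T)$ ends where the forward-promotion path of $T$ starts}, and since forward promotion starts at the box of $n$, namely $(i,j)$, and since $\epsilon^* \circ \partial^* $-type identities show $\epsilon^*(T)$ and $T$ are related by a chain of promotions whose net path bookkeeping tracks the corner of $n$ back to $(i,j)$, the result follows.

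The main obstacle I anticipate is getting the bookkeeping of "which box receives $n$" exactly right across the recursion in the $\epsilon^*$ algorithm: a priori $\epsilon^*(T)$'s value-$n$ box is placed at step $k=0$ at the terminus of the \emph{first} dual-slide of $T$, and one must confirm no later step overwrites or relocates it (it cannot, since that box is deleted from the shape after step $0$) and that relabeling at the end does not move boxes. Once that is pinned down, the proof reduces to the single clean observation that the first dual-slide of the $\epsilon^*$ algorithm is literally the dual-promotion slide, combined with Remark~\ref{remark: promotion and dual-promotion path} / Fact~\ref{fact: dihedral one} to identify its terminus with the starting box $(i,j)$ of promotion on $T$, i.e.\ the box of $n$. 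I would write the argument in terms of this first slide rather than invoking the dihedral identities, as it is more transparent, but I would cite Fact~\ref{fact: dihedral one} as a cross-check that $\epsilon^* \circ \partial$ and $\partial^* \circ \epsilon^*$ agree, which is the abstract shadow of this path-reversal phenomenon.
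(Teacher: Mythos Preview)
You have correctly identified one ingredient: the first dual-slide in the $\epsilon^*$ algorithm on $T$ is literally the dual-promotion slide on $T$, so the box of $n$ in $\epsilon^*(T)$ is the terminus of the dual-promotion path of $T$. But this is \emph{not} the lemma. The lemma asks for the terminus of the dual-promotion path of $\epsilon^*(T)$, not of $T$, and these are different tableaux with different dual-promotion paths. In your last paragraph you try to close the gap by invoking Remark~\ref{remark: promotion and dual-promotion path} to ``identify its terminus with the starting box $(i,j)$ of promotion on $T$''; but that Remark relates the promotion path of $T$ to the dual-promotion path of $\partial(T)$, not to the dual-promotion path of $T$ itself. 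In general the dual-promotion path of $T$ does \emph{not} end at the box of $n$ in $T$: e.g.\ for $T$ with rows $12$ and $3$, the entry $3$ sits at $(2,1)$ while the dual-promotion path of $T$ ends at $(1,2)$.

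There are two clean ways to fix this. The shortest, given what you already have: your observation says ``box of $n$ in $\epsilon^*(X)$ $=$ terminus of dual-promotion path of $X$'' for any $X$. Apply it with $X=\epsilon^*(T)$ and use that $\epsilon^*$ is an involution (Fact~\ref{fact: involution}); you get exactly the lemma. Alternatively, the paper's route: observe that in $\epsilon^*\!\circ\partial(T)$ the entry $n$ sits at $(i,j)$ (the promotion slide and the first dual-slide of $\epsilon^*$ cancel), then use $\epsilon^*(T)=\partial\bigl(\epsilon^*\!\circ\partial(T)\bigr)$ from Fact~\ref{fact: dihedral one} together with Remark~\ref{remark: promotion and dual-promotion path} to conclude that the dual-promotion path of $\epsilon^*(T)$ ends at $(i,j)$. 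Either way, the missing step is pinning down \emph{which} tableau you are dual-promoting.
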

\begin{proof}
It follows from the definition of dual-evacuation using dual-sliding that the position
of $n$ in $\epsilon^* \circ \partial(T)$ is the same as the position of $n$ in $T$ 
(because the sliding in the action of promotion and the first application of dual-sliding 
in the definition of dual-evacuation will ``cancel out'' with respect to the position of $n$). 
By the fact that $\epsilon^* (T) =\partial \circ \epsilon^* \circ \partial(T)$ 
(Fact~\ref{fact: dihedral one}) and the fact that the dual-promotion path of 
$\epsilon^*(T)$ is the reverse of the promotion path of $\epsilon^* \circ \partial(T)$
(Remark~\ref{remark: promotion and dual-promotion path}), the statement follows.
\end{proof}

The above lemma, when specialized to staircase-shaped tableaux, implies the following:

\begin{prop} 
Let $S \in SYT(sc_k)$. 
The promotion path of $\iota(S)$ always passes through the box with entry $n=(k+1)\cdots k/2$.
\end{prop}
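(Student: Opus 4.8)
The plan is to combine Lemma~\ref{lemma: key observation} with the structure of $\iota(S)$ as the pair $(S,\epsilon(S))$. Let $T=\iota(S)\in SYT(k^{(k+1)})$ and $N=(k+1)k$, so the promotion path of $T$ starts at the box containing $N$ (an outside corner in the lower staircase portion). First I would trace where the sliding stops: the empty box created at $N$ moves northwest, and I claim it must cross the anti-diagonal $i+j=k+1$. Indeed, the lower staircase portion of $T$ is, by construction, the $\pi$-rotated complement of $\epsilon(S)$, hence its entries (read in the rotated coordinates) form exactly $\epsilon^*(S)$ shifted up by $n$; by Lemma~\ref{lemma: key observation} applied to $\epsilon^*(S)$ (whose largest entry $n$ sits in the box where $S$ has its corner $n$), the dual-promotion path of $\epsilon^*(S)$ ends at that corner box. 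Translating back through the rotation and complementation, this says the sliding path of $T$, while still inside the lower portion, is forced toward the box of $\iota(S)$ carrying the value $n$, i.e. it exits the lower portion precisely at the box on the anti-diagonal that holds $n$.

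The key step is therefore to make the identification ``promotion on the lower portion of $\iota(S)$ $\leftrightarrow$ dual-promotion on $\epsilon(S)$ $\leftrightarrow$ dual-promotion on $\epsilon^*(S)$ (after transpose)'' completely precise at the level of paths, not just of tableaux. Concretely: removing $N$ and jeu-de-taquin-sliding northwest in $T$, restricted to boxes with $i+j>k+1$, is the same operation as removing the smallest entry and dual-sliding southeast in the rotated-complemented sub-tableau, and the rotated-complemented sub-tableau is exactly $\epsilon^*(S)$ with values shifted by $n$. The largest entry of $\epsilon^*(S)$ occupies, by Fact~\ref{fact: transpose} and the definition of $\iota$, the box that corresponds under rotation to the $n$-box of $\iota(S)$. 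Lemma~\ref{lemma: key observation} (with $\lambda=sc_k$, $T$ there $=\epsilon^*(S)^{*}$... more carefully, applied so that the ``$T$'' of the lemma has its maximal entry in the relevant corner) then pins the endpoint of the dual-promotion path to that box. Hence the sliding path of $\iota(S)$ reaches that box and leaves the lower portion there.

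Once the sliding path has arrived at the box containing $n$, it continues northwest into the upper staircase portion, which is a genuine copy of $S$; in particular the box with entry $n$ is on the promotion path, which is what the proposition asserts. I would phrase the write-up as: (i) recall $\iota(S)\cong(S,\epsilon(S))$ and that promotion restricted to the lower portion is dual-promotion on $\epsilon(S)$; (ii) use Fact~\ref{fact: transpose} to replace $\epsilon(S)$ by $\epsilon^*(S)$ up to transpose, identifying the $n$-box of $\iota(S)$ with the box of the maximal entry of the relevant evacuation tableau; (iii) invoke Lemma~\ref{lemma: key observation} to conclude the dual-sliding/sliding path terminates at that box; (iv) observe the box with entry $n$ is thus on the promotion path.

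The main obstacle I anticipate is step (ii)/(iii): carefully matching up coordinate conventions so that Lemma~\ref{lemma: key observation} applies with the right tableau in the right orientation — the lemma is about where the maximal entry sits versus where the dual-promotion path ends, and here one must track a $\pi$-rotation, a value-complementation, and possibly a transpose simultaneously, and verify that the box of $\iota(S)$ holding $n$ really is the image of the corner where $\epsilon^*(S)$ (or $\epsilon(S)$, transposed) attains its maximum. The jeu-de-taquin bookkeeping itself is routine; the risk is a sign/reflection error in the dictionary between the two halves of $\iota(S)$.
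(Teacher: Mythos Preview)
Your outline is exactly the paper's argument: identify $\iota(S)$ with the pair $(S,\epsilon(S))$, observe that promotion on the lower portion of $\iota(S)$ corresponds to dual-promotion on $\epsilon(S)$, invoke Lemma~\ref{lemma: key observation} together with Fact~\ref{fact: transpose} to locate where the sliding path leaves the lower portion, and conclude that it enters the upper portion at the box holding $n$.

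However, your application of the lemma is garbled, and this is not just bookkeeping noise---it is the one substantive step. You assert (twice) that the largest entry of $\epsilon^*(S)$ sits in the same box as the $n$-corner of $S$. That is false; the paper's appendix in fact observes that these two corners \emph{never} coincide. The lemma does not say anything about where $n$ lies in $\epsilon^*(T)$. The correct invocation is with the lemma's ``$T$'' equal to $S$: if $n$ sits in box $(i,j)$ of $S$, then the dual-promotion path of $\epsilon^*(S)$ ends at $(i,j)$. Now transpose via Fact~\ref{fact: transpose}: the dual-promotion path of $\epsilon(S)=\epsilon^*(S)^t$ ends at $(j,i)$. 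Since $(i,j)$ is a staircase corner, $i+j=k+1$, and one checks from the definition of $\iota$ that box $(j,i)$ of $\epsilon(S)$ is glued at position $(i+1,j)$ of $\iota(S)$, i.e.\ immediately below the $n$-box. Thus the promotion path of $\iota(S)$, restricted to the lower portion, reaches $(i+1,j)$; from there it must step north to $(i,j)$, since $(i,j)$ carries $n$, the maximum of the upper portion, and hence dominates the alternative western neighbour $(i+1,j-1)$ (also on the anti-diagonal). No information about the location of the maximum of $\epsilon^*(S)$ is needed or used.
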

\begin{proof}
Suppose $n$ is in box $(i,j)$ of $S \in SYT(sc_k)$.  
Since $S$ is of staircase shape, we have $\epsilon^*(S) = \epsilon(S)^t$ (Fact~\ref{fact: transpose}). 
The above lemma then says the dual-promotion path of $\epsilon(S)$ ends on box $(j,i)$
of $\epsilon(S)$, which is ``glued'' exactly below box $(i,j)$ of $S$ by the 
construction of $\iota$. Now we use the observation that the promotion path of $\iota(S)$,
when restricted to the lower staircase portion, corresponds to the dual-promotion
path of $\epsilon(S)$.  The result follows.  

\end{proof}

This proves our first main result of the embedding $\iota$. 

\begin{theorem} \label{theorem: embedding preserves promotion}
For $S \in SYT(sc_k)$, $\iota\circ\partial(S) = \partial\circ\iota(S)$.
\end{theorem}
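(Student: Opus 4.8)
The plan is to reduce the identity $\iota\circ\partial(S)=\partial\circ\iota(S)$ to the three structural facts just established: that promotion on $\iota(S)$ restricted to the lower staircase portion is dual-promotion on $\epsilon(S)$, that the promotion path of $\iota(S)$ passes through the box containing $n=(k+1)k/2$ (the previous Proposition), and Lemma~\ref{lemma: key observation}. Throughout I will use the identification $\iota(S)\leftrightarrow(S,\epsilon(S))$ from the construction, and the complementation/rotation formula $R[i,j]=N+1-\epsilon(S)[k+2-i,k+1-j]$ on the lower portion, where $N=(k+1)k$.

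First I would trace the sliding path in $\iota(S)$. Since the largest entry $N$ of $\iota(S)$ sits in the southeast corner, which corresponds (via rotation and complementation) to the box of $\epsilon(S)$ containing its smallest entry $1$, the initial segment of the promotion slide on $\iota(S)$ runs through the lower portion and is exactly the rotated-complemented image of the \emph{dual}-sliding path of $\epsilon(S)$; this is the observation already stated in the paragraph preceding Lemma~\ref{lemma: key observation}. By the previous Proposition this dual-sliding path of $\epsilon(S)$ ends at the box glued directly beneath the box of $S$ containing $n$, so the promotion path of $\iota(S)$ crosses the staircase diagonal exactly at the box occupied by $n$. From that point on, the empty box is in the upper portion, whose filling is literally $S$ with the entry $n$ just vacated; hence the remainder of the slide is precisely the promotion slide of $S$, terminating at an inside corner of $sc_k$ where $0$ is placed.

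Next I would identify the resulting tableau after adding $1$ to every entry. On the upper portion, what happened is exactly the definition of $\partial(S)$, so the upper staircase portion of $\partial\circ\iota(S)$ equals $\partial(S)$ — which is the upper portion of $\iota\circ\partial(S)$ by definition of $\iota$. On the lower portion, the slide was the rotated-complemented dual-slide of $\epsilon(S)$; after re-indexing and the global $+1$, the complementation formula shows the lower portion of $\partial\circ\iota(S)$ is the rotated-complemented image of $\partial^*(\epsilon(S))$. So it remains to check that $\epsilon(\partial(S))=\partial^*(\epsilon(S))$, which is exactly Fact~\ref{fact: dihedral one} ($\epsilon\circ\partial=\partial^*\circ\epsilon$). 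Therefore the lower portion of $\partial\circ\iota(S)$ equals the lower portion of $\iota(\partial(S))$ as well, and the two tableaux agree.

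The main obstacle I anticipate is the bookkeeping at the ``seam'': verifying rigorously that the promotion slide on $\iota(S)$ genuinely decomposes as (dual-slide on the lower portion) followed by (ordinary slide on the upper portion) with the crossing occurring exactly at the box of $n$, rather than, say, bending back across the diagonal. This is where the previous Proposition (via Lemma~\ref{lemma: key observation}) does the real work: it pins down that the lower dual-slide exits precisely beneath the entry $n$, so that when the empty box reaches the diagonal the jeu-de-taquin comparison is between the vacated seam box and the entries of $S$, forcing the empty box into the upper portion and keeping it there. Once that decomposition is justified, the rest is the routine translation through the complementation formula together with Fact~\ref{fact: dihedral one}.
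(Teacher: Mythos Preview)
Your proposal is correct and follows essentially the same route as the paper: show (via the preceding Proposition) that the promotion path of $\iota(S)$ crosses the seam exactly at the box of $n$, so that the slide decomposes into the dual-slide of $\epsilon(S)$ on the lower portion and the promotion slide of $S$ on the upper portion, and then match the lower portion using $\epsilon\circ\partial=\partial^*\circ\epsilon$. The paper leaves the invocation of Fact~\ref{fact: dihedral one} and the seam bookkeeping implicit, whereas you spell them out, but the argument is the same.
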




By the above theorem and the definition of evacuation, we have that 
\begin{theorem} \label{theorem: embedding preserves evacuation}
For $S \in SYT(sc_k)$, $\iota\circ\epsilon(S) = \epsilon\circ\iota(S)$.
\end{theorem}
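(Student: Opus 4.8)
The plan is to reduce Theorem~\ref{theorem: embedding preserves evacuation} to Theorem~\ref{theorem: embedding preserves promotion} using the structural relations among promotion, evacuation, and dual-evacuation recorded in Facts~\ref{fact: involution}--\ref{fact: transpose}, rather than tracing jeu-de-taquin slides directly. The cleanest route uses the identity $\epsilon = \partial^{n-1}\circ\partial^{n-2}\circ\cdots\circ\partial\circ\mathrm{id}$-type expansions; more precisely, I would invoke the standard fact (a consequence of Sch\"utzenberger's relations, already implicit in Facts~\ref{fact: dihedral one} and~\ref{fact: dihedral two}) that evacuation on $SYT(\mu)$ for $\mu\vdash n$ can be written as a fixed word in promotion, namely $\epsilon = \partial\circ\bar\partial_{n-1}\circ\cdots$, but the shape-independent statement I actually want is simpler: $\epsilon^* = \partial^{n}\circ\epsilon$ on any $SYT(\mu)$ with $\mu\vdash n$? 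That is not quite a Fact as stated, so instead I would proceed as follows.

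First I would observe that $\iota$ intertwines promotion (Theorem~\ref{theorem: embedding preserves promotion}), hence intertwines every power $\partial^m$; in particular, writing $N = (k+1)k$ for the size of $k^{(k+1)}$ and $n = \binom{k+1}{2}$ for the size of $sc_k$, we get $\iota\circ\partial^m = \partial^m\circ\iota$ for all $m$. Second, I would express evacuation purely in terms of promotion and a \emph{shape-dependent but $\iota$-transparent} ingredient. The key classical identity is that on $SYT(\mu)$ with $|\mu|=n$, evacuation is recovered from iterated promotion applied to nested sub-tableaux exactly as in Definition~\ref{definition: evacuation}; but tracking that through $\iota$ requires knowing $\iota$ restricts compatibly to sub-shapes, which it does not literally. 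So the honest approach is: use Fact~\ref{fact: dihedral two}, $\epsilon\circ\epsilon^* = \partial^n$, \emph{on the rectangle} $k^{(k+1)}$, where $\partial^N = \mathrm{id}$ (Fact~\ref{fact: promotion on rectangle}) and $\epsilon = \epsilon^*$ (Fact~\ref{fact: ev and dev on rect tab}); combine this with the staircase relations Fact~\ref{fact: promotion on staircase} ($\partial^{2n}=\mathrm{id}$, $\partial^n = {}^t$) and Fact~\ref{fact: transpose} ($\epsilon^* = \epsilon^{\,t}$). Concretely, on $SYT(sc_k)$ one has $\epsilon = \partial^{?}\circ(\text{transpose})$ or some such closed form; I would derive, from $\epsilon\circ\partial=\partial^*\circ\epsilon$ and $\partial^n = {}^t$, that $\epsilon$ and transpose generate together with $\partial$ a dihedral-type group, and pin down $\epsilon$ as a specific element expressible via $\partial$ and an operation that $\iota$ visibly respects.

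The slickest version: since $\iota$ is injective and $\iota\circ\partial = \partial\circ\iota$, the subgroup of $\mathrm{Sym}(\mathrm{image}(\iota))$ generated by $\partial$ acts on $\mathrm{image}(\iota)$ compatibly with its action on $SYT(sc_k)$; evacuation on \emph{both} the rectangle and the staircase lies in the normalizer of $\langle\partial\rangle$ and satisfies $\epsilon\partial\epsilon^{-1} = \partial^{-1}$ (Fact~\ref{fact: dihedral one}, since $\epsilon$ is an involution by Fact~\ref{fact: involution}). So $\epsilon_{\text{rect}}\circ\iota$ and $\iota\circ\epsilon_{\text{stair}}$ are two maps $SYT(sc_k)\to SYT(k^{(k+1)})$ that both conjugate $\partial$ to $\partial^{-1}$ and are both involutive in the appropriate sense; to conclude they are equal I would check they agree on a single $\partial$-orbit representative and use that $\epsilon$ is \emph{determined} by its conjugation action on $\partial$ together with its value at one point on each orbit — and here the fact that $\iota(S)$'s promotion path always passes through the box with entry $n$ (the Proposition preceding the theorems) is what forces the bookkeeping of the two evacuation procedures to line up. The cleanest concrete statement to verify at the base point is that $\epsilon$ of a rectangle, restricted via $\iota$'s upper-staircase/lower-staircase decomposition $\iota(S)\leftrightarrow(S,\epsilon(S))$, swaps the two coordinates appropriately, i.e.\ $\epsilon(\iota(S)) \leftrightarrow (\epsilon(S), S)$ up to the rotation-and-complement bookkeeping, which is exactly $\iota(\epsilon(S))$ by the definition of $\iota$ (using $\epsilon^*=\epsilon^t$, Fact~\ref{fact: transpose}).

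I expect the main obstacle to be the bookkeeping in the last step: verifying that $\epsilon$ on the rectangle $k^{(k+1)}$, written out via Definition~\ref{definition: evacuation}'s nested-sliding procedure, interacts with the glued decomposition $\iota(S) = (S \text{ on top}, \text{ rotated-complemented }\epsilon(S)\text{ on bottom})$ so as to produce $\iota(\epsilon(S))$ — in particular that the slides used in computing rectangle-evacuation never "leak" across the staircase diagonal in a way that breaks the identification, which again reduces to the promotion-path-through-$n$ Proposition applied to every intermediate sub-tableau. If that leak-free property is delicate, the fallback is the purely formal argument: from Theorem~\ref{theorem: embedding preserves promotion} one gets $\iota\partial^m = \partial^m\iota$; using $\epsilon\partial\epsilon = \partial^{-1}$ on both sides and $\partial^{2n}_{\text{stair}} = \mathrm{id} = \partial^{N}_{\text{rect}}$ with $2n = N = k(k+1)$ (note these orders coincide!), one shows $\iota^{-1}\circ\epsilon_{\text{rect}}\circ\iota$ is an involution on $SYT(sc_k)$ conjugating $\partial$ to $\partial^{-1}$ and fixing the same tableaux as $\epsilon_{\text{stair}}$ (the symmetric ones), and since on $SYT(sc_k)$ the coincidence of order $2n=N$ makes the relevant dihedral groups isomorphic, a short rigidity argument identifies the two involutions. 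I would present the formal argument as the proof and the slide-leak observation as the intuition.
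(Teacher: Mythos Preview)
The paper's proof is a single sentence: ``By the above theorem and the definition of evacuation.'' The intended reading is that evacuation (Definition~\ref{definition: evacuation}) is built from exactly the same jeu-de-taquin slides that define promotion, so once Theorem~\ref{theorem: embedding preserves promotion} and the supporting Proposition show that each such slide on $\iota(S)$ respects the upper/lower staircase split, the whole evacuation does too. You are working much harder than the paper does.

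Your proposed ``formal'' rigidity argument has a genuine gap. On $SYT(sc_k)$ there is not a unique involution conjugating $\partial$ to $\partial^{-1}$: every $\epsilon\circ\partial^{j}$ for $0\le j<2n$ is one, and in particular $\epsilon$ and $\epsilon^{*}=\epsilon\circ\partial^{n}$ (combine Facts~\ref{fact: dihedral two} and~\ref{fact: promotion on staircase}) are two \emph{distinct} such involutions. So the dihedral relation alone cannot identify $\iota^{-1}\circ\epsilon_{\mathrm{rect}}\circ\iota$ with $\epsilon_{\mathrm{stair}}$ rather than with $\epsilon^{*}_{\mathrm{stair}}$. Your proposed tiebreaker, that the two involutions ``fix the same tableaux,'' is neither justified nor obviously sufficient, and your argument also silently assumes that $\epsilon_{\mathrm{rect}}$ preserves $\mathrm{image}(\iota)$, which is essentially the theorem you are trying to prove.

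The observation you relegate to a ``base point check'' is in fact a complete proof on its own, and arguably cleaner than either your rigidity route or the paper's iterated-sliding route. On a rectangle, $\epsilon=\epsilon^{*}$ is literally ``rotate by $\pi$ and replace each entry $i$ by $N+1-i$'' (classical; it follows for instance from Remark~\ref{remark: RSK evacuation}). Applying this to $\iota(S)$ simply swaps the two staircase halves: the lower half of $\iota(S)$ is by construction the rotate-and-complement of $\epsilon(S)$, so a second rotate-and-complement returns $\epsilon(S)$, now sitting in the upper half; meanwhile $S$ moves to the lower half as the rotate-and-complement of $S=\epsilon(\epsilon(S))$. That is exactly $\iota(\epsilon(S))$. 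Commit to this argument and drop the rest.
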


\begin{remark}
It can be show either independently or as a corollary of 
Theorem~\ref{theorem: embedding preserves promotion} that
$$\iota\circ\partial^*(S) = \partial^*\circ\iota(S).$$

On the other hand, it is \emph{not} true that $\iota\circ\epsilon^*(S) = \epsilon^*\circ\iota(S)$.
On the contrary by Fact~\ref{fact: ev and dev on rect tab} we know that
$$\iota\circ\epsilon(S) = \epsilon^*\circ\iota(S).$$
It is not hard to see that
$$\iota\circ\epsilon^*(S) = \epsilon\circ\iota(S^t).$$
\end{remark}

\section{Descent vector} \label{section: descent vector}

\subsection{Descent vector of rectangular tableaux}
Rhoades \cite{rhoades} invented the notion of ``extended descent'' in order to describe 
the promotion action on rectangular tableaux: 
\begin{definition} \label{definition: extended descent} 
Let $R \in SYT(r^c)$, and $n=c\cdots r$. We say $i$ is an \textbf{extended descent} of $R$ if 
either $i$ is a descent of $R$, or $i=n$ and $1$ is a descent of $\partial(R)$. 
The \textbf{extended descent set} of $R$, denoted by $\extDes(R)$, is the set of all
extended descents of $R$. 
\end{definition}

\begin{example}
In the case that $R_1=\young(136,257,49\eleven,8\ten\twelv)$, $\extDes(R_1)=\{1,3,6,7,9,11\}$.
Here $12 \not\in \extDes(R_1)$ because $1$ is not a descent of 
$\partial(R_1)=\young(127,348,56\ten,9\eleven\twelv)$. 

In the case that $R_2=\young(124,359,68\eleven,7\ten\twelv)$, $\extDes(R_2)=\{2,4,5,6,9,11,12\}$. 
Here $12 \in \extDes(R_2)$ because $1$ is a descent of 
$\partial(R_2)=\young(135,246,79\ten,8\eleven\twelv)$.
\end{example}

It is often convenient to think of $\extDes(R)$ as an array of $n$ boxes, where a dot is put at 
the $i$-th box of this array if and only if $i$ is an extended descent of $R$. 
In this form, we will call 
$\extDes(R)$ the \textbf{descent vector} of $R$. Furthermore, we identify 
(``glue together'') the left edge of the left-most box and the right edge of the right-most box so that
 the array $\extDes(R)$ forms a circle.
It therefore makes sense to talk about rotating $\extDes(R)$ to the right, where the 
content of the $i$-th box goes to the $(i+1)$-st box (mod n), or similarly, rotating to the left.    

\begin{example}
Continuing the above example, 
$$\extDes(R_1)=\young(\bullet\hfil\bullet\hfil\hfil\bullet\bullet\hfil\bullet\hfil\bullet\hfil)$$ 
and 
$$\extDes(R_2)=\young(\hfil\bullet\hfil\bullet\bullet\bullet\hfil\hfil\bullet\hfil\bullet\bullet) .$$ 
\end{example}

We would like to point out that the map $\extDes : SYT(r^c) \to (0,1)^n$ is not injective and that
the pre-images of $\extDes$ are not equinumerous in general. 

Rhoades \cite{rhoades} showed a nice property of the promotion action on the extended descent set. 
In the language of descent vectors, it has the following visualization:

\begin{theorem}[Rhoades, \cite{rhoades}] \label{theorem: pordv}
If $R$ is a standard tableau of rectangular shape, then the promotion $\partial$ 
rotates $\extDes(R)$ to the right by one position. 
\end{theorem}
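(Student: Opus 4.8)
The plan is to track exactly how a single descent $i$ of $R$ is affected by promotion, splitting into the cases $i < n$ and $i = n$, and to show in each case that $i$ is an (extended) descent of $R$ if and only if $i+1 \pmod n$ is an (extended) descent of $\partial(R)$. For $1 \le i \le n-2$: promotion adds $1$ to every entry after inserting $0$, so the entry that was $i$ becomes the entry $i+1$ in $\partial(R)$ and the entry that was $i+1$ becomes $i+2$, provided neither of these boxes lies on the promotion path and is displaced. The key point is that jeu-de-taquin sliding preserves the relative vertical positions of the entries $i$ and $i+1$ except possibly when the promotion path passes through one of the two relevant boxes; one then checks by hand (using that sliding moves the empty box strictly northwest, never separating a consecutive pair in a way that flips a descent) that the descent relation is transported intact. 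This is essentially a local analysis of a jeu-de-taquin slide near two consecutive values.

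The genuinely new content is at the two ``wrap-around'' positions $i = n-1$ and $i = n$. For $i = n-1$: after promotion, the box that held $n$ becomes $0$ and then $1$, and the box that held $n-1$ becomes $n$; I would show $n-1 \in \Des(R)$ iff the box of $1$ in $\partial(R)$ lies strictly north of the box of $n$ in $\partial(R)$, i.e. iff $n-1 \in \Des(\partial(R))$ — here I may use that on a rectangle the box of $1$ is the northwest corner and the box of $n$ is the southeast corner, so one must instead phrase this via where $2$ sits relative to $1$, i.e. $1 \in \Des(\partial(R))$. For $i = n$: by definition $n \in \extDes(R)$ iff $1 \in \Des(\partial(R))$, which is exactly the condition that $2$ lies strictly south of $1$ in $\partial(R)$; rotating right by one sends box $n$ to box $1$, and I must check that $1 \in \extDes(\partial(R))$ (i.e. $1 \in \Des(\partial(R))$) matches this. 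These identifications should drop out once one writes down what the first entries of $\partial(R)$ look like, using that the rectangular shape forces $1$ into position $(1,1)$.

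An alternative, cleaner route — and probably the one worth pursuing — is to use the reading-word characterization of descents (Remark~\ref{remark: reading word descent}) together with Rhoades' realization of promotion as a product of Bender--Knuth-type involutions or, more directly, the known fact (attributed to Sch\"utzenberger, and available to us via Fact~\ref{fact: promotion on rectangle} and Fact~\ref{fact: dihedral two}) that on a rectangle promotion has order $n$. One sets up the bijection between $\extDes(R)$ and a cyclic word and shows promotion acts as a cyclic shift by comparing the reading words of $R$ and $\partial(R)$: left descents are Knuth invariants, so it suffices to exhibit reading words of $R$ and $\partial(R)$ whose left-descent sets are related by the shift, and the extra ``extended'' descent at $n$ is precisely what makes the bookkeeping close up into a genuine cyclic rotation rather than a mere shift with boundary defects.

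The main obstacle I expect is the boundary case analysis at $i=n-1$ and $i=n$: away from the wrap-around, the claim is a routine consequence of jeu-de-taquin preserving the order structure of consecutive entries, but making the extended descent at position $n$ behave correctly — and in particular verifying that the promotion path's interaction with the box of $n$ produces exactly the descent/non-descent of $1$ in $\partial(R)$ that the cyclic shift demands — requires care, and is presumably why Rhoades introduced the notion of extended descent in the first place.
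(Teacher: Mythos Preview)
The paper does not prove this theorem. It is stated with attribution to Rhoades and a citation to \cite{rhoades}, and no argument is given in the present paper. So there is no ``paper's own proof'' to compare your proposal against.

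That said, a few remarks on the mathematical content of your sketch. The plan for $1 \le i \le n-2$ --- that $i \in \Des(R)$ iff $i+1 \in \Des(\partial(R))$, via a local jeu-de-taquin analysis of the two consecutive values --- is the standard approach and is essentially correct, though it is a sketch rather than a proof. The case $i=n$ is, as you almost observe, a tautology: by Definition~\ref{definition: extended descent}, $n \in \extDes(R)$ means precisely $1 \in \Des(\partial(R))$, which is $1 \in \extDes(\partial(R))$; there is nothing to check.

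Your handling of $i=n-1$, however, is confused. You propose to show that $n-1 \in \Des(R)$ iff ``the box of $1$ in $\partial(R)$ lies strictly north of the box of $n$ in $\partial(R)$''. On a rectangle the box of $1$ is always $(1,1)$ and the box of $n$ is always $(r,c)$, so that condition is vacuous. What is actually required is
\[
n-1 \in \Des(R) \iff n \in \extDes(\partial(R)) \iff 1 \in \Des(\partial^2(R)),
\]
i.e.\ you must relate whether $n$ sits strictly south of $n-1$ in $R$ to whether the promotion path of $\partial(R)$ ends with a vertical move. This is the one genuinely nontrivial boundary case, and your proposal does not address it; the sentence you wrote there does not parse into a usable statement. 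Your ``alternative cleaner route'' via reading words is too vague to evaluate --- you would need to name the specific reading words and explain why their left-descent sets differ by a cyclic shift, including at the wrap-around.
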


\begin{example}
Continuing the above example, if $R_3=\partial(R_2)=\young(135,246,79\ten,8\eleven\twelv)$ 
then 
$$\extDes(R_3)=
\young(\bullet\hfil\bullet\hfil\bullet\bullet\bullet\hfil\hfil\bullet\hfil\bullet) .$$ 
\end{example}

The action of evacuation $\epsilon$ on descent vectors is also very nice:
(Note that dual-evacuation $\epsilon^*$ is the same as evacuation $\epsilon$ on rectangular
tableaux.)

\begin{theorem} \label{theorem: eordv}
Let $R \in SYT(r^c)$ and $n=c\cdots r$.  Then evacuation $\epsilon$ 
rotates $\extDes(R)$ to the right by one position and then flips the result of the 
rotation. More precisely, the $i$-th box of $\extDes(\epsilon(R))$ is dotted
if and only if the $(n-i)$-th (mod n) box of $\extDes(R)$ is dotted.  
\end{theorem}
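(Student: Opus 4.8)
The plan is to reduce the statement about $\epsilon$ to the already-established statement about $\partial$ (Theorem~\ref{theorem: pordv}) using the basic relations between evacuation and promotion from Fact~\ref{fact: dihedral one} and Fact~\ref{fact: dihedral two}, together with the observation that ``flipping'' the descent vector is a manifestation of $\epsilon^*$ acting on the ordinary descent set. First I would dispose of the case $i = n$ separately, and handle descents $1 \le i \le n-1$ by a classical fact: if $i$ is an ordinary descent of $T$ then $n-i$ is an ordinary descent of $\epsilon^*(T)$ (equivalently of $\epsilon(T)$, since these coincide on rectangles). This is immediate from the reading-word description in Remark~\ref{remark: reading word descent} combined with the RSK characterization of $\epsilon^*$ in Remark~\ref{remark: RSK evacuation}: passing from $w$ to $w^\sharp$ replaces the recording tableau $Q$ by $\epsilon^*(Q)$ and visibly sends left descents at position $i$ to left descents at position $n-i$. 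So for $1 \le i \le n-1$ the ordinary descent part of the extended descent vector already flips correctly.

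Next I would account for the single extra ``extended'' dot at position $n$. By definition $n \in \extDes(R)$ iff $1 \in \Des(\partial(R))$, and by the flip just described $1 \in \Des(\partial(R))$ iff $n-1 \in \Des(\epsilon(\partial(R)))$. Now apply Fact~\ref{fact: dihedral one}: $\epsilon \circ \partial = \partial^* \circ \epsilon$, so $\epsilon(\partial(R)) = \partial^*(\epsilon(R))$. Thus $n \in \extDes(R)$ iff $n-1 \in \Des(\partial^*(\epsilon(R)))$. On the other side, what should land in position $n$ of $\extDes(\epsilon(R))$? The theorem claims box $n$ of $\extDes(\epsilon(R))$ equals box $0 \equiv n$ of $\extDes(R)$ — i.e.\ it is always \emph{undotted} (since there is no ``box $0$''/the all-zero convention), so really what we must check is position $n-1$: box $n-1$ of $\extDes(\epsilon(R))$ should equal box $1$ of $\extDes(R)$, i.e.\ $n-1 \in \extDes(\epsilon(R)) \iff 1 \in \Des(R)$. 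I would verify this by the same mechanism, being careful that for $\epsilon(R)$ the value at a genuine position $1\le j \le n-1$ is just the ordinary descent condition, while the extended dot of $\epsilon(R)$ sits at position $n$ and corresponds to $1 \in \Des(\partial(\epsilon(R))) = \Des(\epsilon(\partial^*(R)))$, which again flips to $n-1 \in \Des(\partial^*(R))$, matching box $1$ of $\extDes(R)$ after Rhoades' rotation theorem applied to $\partial^*$.

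A cleaner way to organize the whole argument, which I would probably adopt for exposition, is to combine Theorem~\ref{theorem: pordv} with the flip lemma directly at the level of the circular vector. Write $\mathrm{rot}$ for rotation to the right by one and $\mathrm{flip}$ for $i \mapsto n-i \pmod n$; Theorem~\ref{theorem: pordv} says $\extDes \circ \partial = \mathrm{rot} \circ \extDes$, and the target identity is $\extDes \circ \epsilon = \mathrm{flip} \circ \extDes$. Since $\mathrm{flip} \circ \mathrm{rot} = \mathrm{rot}^{-1} \circ \mathrm{flip}$ as operations on the circle, and since $\epsilon \circ \epsilon^* = \partial^n$ is the identity on rectangles (Fact~\ref{fact: dihedral two} with Fact~\ref{fact: promotion on rectangle}) so that $\epsilon = \epsilon^*$, it suffices to establish the single ``base'' relation $\extDes(\epsilon^*(R)) = \mathrm{flip}(\extDes(R))$ and then check it is consistent with the $\partial$-equivariance — or, even more economically, to prove $\extDes(\epsilon(R)) = \mathrm{flip}(\extDes(R))$ directly, which by the equivariance statements already forces compatibility with all of $\langle \partial, \epsilon\rangle$.

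The main obstacle I anticipate is the careful bookkeeping at the ``seam'' position $n$ (and the neighbouring positions $1$ and $n-1$), where the definition of the extended descent differs from an ordinary descent. Everywhere else the result is just the standard ``evacuation flips the descent set'' fact pushed through RSK, which is routine; but at the seam one must juggle Fact~\ref{fact: dihedral one}, the relation $\partial^* = \partial^{-1}$, and the fact that the extra dot of $R$ is read off from $\partial(R)$ while the extra dot of $\epsilon(R)$ is read off from $\partial(\epsilon(R))$, and confirm these translate into each other under $\mathrm{flip}$ exactly. I would write that part out in full, using the reading-word/$w^\sharp$ description to make the descent-flip completely explicit, and treat the rest briefly.
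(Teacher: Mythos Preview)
Your approach is essentially the paper's: handle ordinary positions $1 \le i \le n-1$ by the reading-word/$w^\sharp$ argument (Remarks~\ref{remark: reading word descent} and~\ref{remark: RSK evacuation}), then treat the seam position $n$ via Fact~\ref{fact: dihedral one} and Theorem~\ref{theorem: pordv}.

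One correction at the seam: you misread the $\pmod n$ convention. The theorem says box $n$ of $\extDes(\epsilon(R))$ matches box $n-n \equiv n \pmod n$ of $\extDes(R)$, i.e.\ box $n$ itself --- \emph{not} an ``always undotted'' nonexistent box $0$. In fact your own chain already proves exactly this: you reached
\[
n \in \extDes(R) \iff n-1 \in \Des\bigl(\partial^{-1}(\epsilon(R))\bigr),
\]
and one application of Theorem~\ref{theorem: pordv} (rotation) gives $n-1 \in \extDes\bigl(\partial^{-1}(\epsilon(R))\bigr) \iff n \in \extDes(\epsilon(R))$, which is the required seam identity. The paper closes the loop the same way and invokes that $\epsilon$ is an involution for the converse. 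Your subsequent digression about position $n-1$ is unnecessary: both $n-1$ and $1$ lie in $\{1,\ldots,n-1\}$, so that case is already covered by the ordinary-descent flip.
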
 
\begin{proof}
We first note that $\epsilon(R)=\epsilon^*(R)$ (Fact~\ref{fact: ev and dev on rect tab}).
Then we note that $\Des(R)$ is the set of left descents of the column reading word $w_R$ 
of $R$ (Remark~\ref{remark: reading word descent}).  
Now, $i$ is a left descent of $w_R$ if and only if $n-i$ is a left descent of $w_R^\sharp$
(Remark~\ref{remark: RSK evacuation}). Therefore $i\in \Des(R)$ if and only if 
$n-i \in \Des(\epsilon(R))$.  

If $n \in \extDes(R)$, then $1 \in \Des(\partial(R))$ 
(Definition~\ref{definition: extended descent}), 
thus $n-1 \in \Des(\epsilon\circ\partial(R))$ by the previous paragraph,
thus $n-1 \in \Des(\partial^{-1}\circ\epsilon(R))$ (Fact~\ref{fact: dihedral one}), 
thus $n \in \extDes(\epsilon(R))$ (Theorem~\ref{theorem: pordv}). 
Since $\epsilon$ is an involution, the converse is also true.  
\end{proof}

\begin{example}
Continuing the above example, $\epsilon(R_3)=\young(125,346,79\eleven,8\ten\twelv)$ 
and 
$$\extDes(\epsilon(R_3))=
\young(\hfil\bullet\hfil\hfil\bullet\bullet\bullet\hfil\bullet\hfil\bullet\bullet) .$$ 
It is clear that this action is an involution.
\end{example}

\subsection{Descent vector of staircase tableaux}
For staircase tableaux, we give the following construction of descent vector.
\begin{definition}
Let $S \in SYT(sc_k)$ and $n=|sc_k|=(k+1)\cdots k/2$. Then $\extDes(S)$ is an array of $2n$ boxes. The rules
of placing dots into these boxes are the following.
\begin{itemize} 
   \item If $i \in Des(S)$, then put a dot in the $i$-th box and leave the $(n+i)$-th box empty.
   \item If $i \not\in Des(S)$, then put a dot in the $(n+i)$-th box and leave the $i$-th box empty.
   \item If $1 \in Des(\partial(S))$, then leave the $n$-th box empty and put a dot in the 
         $(2n)$-th box. 
   \item If $1 \not \in Des(\partial(S))$, then leave the $(2n)$-th box empty and put a dot in the 
         $n$-th box. 
\end{itemize} 
We identify the left edge and the right edge of this array. 
\end{definition}

\begin{example}
In the case that $S_1=\young(145,26,3)$, 
$$\extDes(S_1)=\young(\bullet\bullet\hfil\hfil\bullet\bullet\hfil\hfil\bullet\bullet\hfil\hfil).$$ 
In the case that $S_2=\young(125,36,4)$, 
$$\extDes(S_2)=\young(\hfil\bullet\bullet\hfil\bullet\hfil\bullet\hfil\hfil\bullet\hfil\bullet).$$ 
\end{example}

As in the case of rectangular tableaux, the map $\extDes$ is not injective and the pre-images of 
$\extDes$ are not equinumerous in general. 

From the definition, we see that the first half and the second half of $\extDes(S)$ are 
just complements of each other, that is, for each $i \in [(k+1)\cdots k]$ precisely one of the 
$i$-th and $((k+1)\cdots k+i)$-th boxes is dotted. Thus the second half of $\extDes(S)$ is
redundant. On the other hand, this redundancy demonstrates the link between $\extDes(S)$
and $\extDes(\iota(S))$ as stated in Theorem \ref{theorem: edv and iota}.  First, we need a
supporting lemma, whose proof is not hard but rather tedious, so we leave it to the
appendix. 

\begin{lemma} \label{lemma: promotion path and dual-promotion path} 
Let $S \in SYT(sc_k)$ and $n=|sc_k|$. 
If the promotion path of $S$ ends with a vertical (up) move, 
then the corner of $n$ in $\epsilon^*(S)$ is northeast of the corner of $n$ in $S$.   
If the promotion path of $S$ ends with a horizontal (left) move, 
then the corner of $n$ in $\epsilon^*(S)$ is southwest of the corner of $n$ in $S$.   

\end{lemma}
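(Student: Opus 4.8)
The plan is to translate the statement into a comparison of two jeu-de-taquin paths of $S$, reduce it to a single inequality between the rows of their endpoints, and then settle that inequality by a (tedious) analysis of the slides --- the part the authors defer to the appendix.

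\emph{Reduction.} By the dual-sliding description of dual-evacuation (Definition~\ref{definition: dual-evacuation}), the first dual-slide performed on $S$ traces exactly the dual-promotion path $Q$ of $S$, and the entry $n$ of $\epsilon^*(S)$ is placed in the cell $C$ where $Q$ terminates; meanwhile the corner of $n$ in $S$ is the initial cell $A$ of the promotion path $P$ of $S$. For the staircase $sc_k$ every outside corner lies on the antidiagonal $\{(i,j):i+j=k+1\}$, and the only cell $(i,j)\in sc_k$ with $(i-1,j),(i,j-1)\notin sc_k$ is $(1,1)$; hence $P$ runs from $A$ to $(1,1)$ by north/west steps, $Q$ runs from $(1,1)$ to $C$ by south/east steps, and each meets every antidiagonal $i+j=2,\dots,k+1$ in exactly one cell. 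Since $A$ and $C$ lie on the top antidiagonal, ``$C$ is northeast of $A$'' means $\mathrm{row}(C)<\mathrm{row}(A)$ and ``southwest'' means $\mathrm{row}(C)>\mathrm{row}(A)$; and since $k>2$, $P$ is nonempty, so its last step is defined and is vertical exactly when the penultimate cell of $P$ is $(2,1)$, horizontal exactly when it is $(1,2)$. It therefore suffices to prove: $P$ ends vertically $\Rightarrow\mathrm{row}(C)<\mathrm{row}(A)$, and $P$ ends horizontally $\Rightarrow\mathrm{row}(C)>\mathrm{row}(A)$.

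\emph{The comparison.} For $m\in\{2,\dots,k+1\}$ let $p(m),q(m)$ be the rows of the cells in which $P,Q$ meet $i+j=m$; both are weakly increasing, $p(2)=q(2)=1$, $p(k+1)=\mathrm{row}(A)$, $q(k+1)=\mathrm{row}(C)$. Two easy facts cut down the work. First, a dual-slide's empty box always moves toward whichever of its south and east neighbours is smaller, and one checks that it can therefore never land on the cell holding the maximal entry $n$; so $C\neq A$. Second, $(2,1)\in P$ iff $P$ ends vertically (the only in-shape north/west step out of $(2,1)$ is $(2,1)\to(1,1)$), and in that case $\partial(S)[2,1]=2$ while $\partial(S)[1,2]=S[1,2]+1\ge 3$. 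Granting these, it is enough to establish the ``pointwise'' comparison $q(m)\le p(m)$ for all $m$ when $P$ ends vertically, and $q(m)\ge p(m)$ for all $m$ when $P$ ends horizontally: taking $m=k+1$ and using $C\neq A$ then yields the strict inequalities. By Remark~\ref{remark: promotion and dual-promotion path} the reverse of $P$ is the dual-promotion path of $\partial(S)$, so this is a comparison of the dual-promotion paths of $\partial(S)$ and of $S$ --- two tableaux differing by one promotion whose promotion path is $P$ --- and the second fact above pins down how their first steps relate. I would then push the comparison forward along the two paths by induction, distinguishing at each stage the position of the promotion empty box of $S$ (has it entered column $1$, entered row $1$, or is it still strictly interior and about to leave the $(2,2)$-region?) and tracking how the entries already displaced by the slides affect the next south/east comparison made by the dual-slide of $S$.

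\emph{The obstacle.} This inductive step is the real difficulty, and it is the bookkeeping the authors call tedious: because slides permute entries, the outcome of ``smaller south/east neighbour'' at a given cell depends on what earlier slides have moved there, so the induction fans out into the cases above (each of the three positions of the promotion box, refined by whether a displaced entry flips a comparison), and in each one must check that the ordering $q\le p$ or $q\ge p$ survives and is governed by the vertical/horizontal dichotomy of the last step of $P$. Once that is in hand, $m=k+1$ gives $\mathrm{row}(C)\le\mathrm{row}(A)$ or $\ge$, and $C\neq A$ upgrades it to a strict inequality, which is the lemma.
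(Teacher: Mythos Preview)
Your reduction is correct and lines up with the paper's: the $n$-corner of $\epsilon^*(S)$ is the terminus $C$ of the dual-promotion path $Q$ of $S$, the $n$-corner of $S$ is the start $A$ of the promotion path $P$, and since all outside corners of $sc_k$ sit on a single antidiagonal, the lemma reduces to comparing $\mathrm{row}(C)$ and $\mathrm{row}(A)$. Your direct argument that $C\neq A$ (the dual-slide always has a smaller neighbour to move to, because in the staircase the cell holding $n$ has an antidiagonal neighbour on each side) is a pleasant alternative to the paper's, which instead observes via Lemma~\ref{lemma: key observation} that $S$ and $\epsilon^*\circ\partial(S)$ share the same $n$-corner, while $\epsilon^*(S)=\partial\circ\epsilon^*\circ\partial(S)$ cannot.

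Where you make your life harder than necessary is in the ``comparison'' step. You invoke Remark~\ref{remark: promotion and dual-promotion path} to recast the reversed promotion path of $S$ as the dual-promotion path of $\partial(S)$, and then propose to compare the dual-promotion paths of the two \emph{different} tableaux $S$ and $\partial(S)$. That is exactly what generates the bookkeeping you describe as the obstacle: the entries of $\partial(S)$ differ from those of $S$ precisely along $P$, so every south/east decision in $\partial(S)$ has to be tracked back through a slide. The paper avoids this entirely by staying inside the single tableau $S$. Both paths are determined by the \emph{original} entries of $S$: the promotion path (read in reverse from $(1,1)$) goes south to $(a+1,b)$ rather than east at $(a,b)$ exactly when $S[a,b]>S[a+1,b-1]$, while the dual-promotion path goes south at $(a,b)$ exactly when $S[a+1,b]<S[a,b+1]$. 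These two local rules interlock directly, with no shifted entries to chase.

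Concretely, the paper runs a ``boy and girl'' argument: both start at $(1,1)$, the boy along $P$ reversed, the girl along $Q$. By hypothesis the boy's first step is south. If the girl ever walks south together with the boy down column $j$, she must turn east no later than he does, because at the cell $(i,j)$ where the boy turns east one has $S[i,j]>S[i-1,j+1]$, which forces the girl's comparison at $(i-1,j)$ to send her east as well. So she is strictly northeast before his first east turn. If they meet again at some $(s,t)$ (she entering from the north, he from the west), the same kind of entry comparison shows the boy cannot go east there, so he goes south and the previous paragraph applies again. Inducting on meetings gives $q(m)\le p(m)$ for all $m$, and your $C\neq A$ finishes. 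No case split on ``has the promotion box entered column $1$ / row $1$'' is needed; that trichotomy is an artifact of comparing across two tableaux.
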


\begin{theorem} \label{theorem: edv and iota}
For $S \in SYT(sc_k)$, $\extDes(S)=\extDes(\iota(S))$.
\end{theorem}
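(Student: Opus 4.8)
The plan is to compare the two descent arrays box by box, splitting the index range $[1,2n]$ into three parts: the ``ordinary descent'' indices $1 \le i \le n-1$, the special index $i = n$, and the ``complementary'' indices $n+1 \le i \le 2n-1$. For the ordinary range, I would argue that $i \in \Des(\iota(S))$ if and only if $i \in \Des(S)$: by construction $\iota(S)$ agrees with $S$ on entries $1,\dots,n$ in the upper staircase portion, and entry $n+1$ sits in the lower staircase portion, so whether $i+1$ lies strictly south of $i$ in $\iota(S)$ is decided entirely by the copy of $S$ sitting inside $\iota(S)$ for $i \le n-1$. This matches the first bullet of the staircase $\extDes$ definition (dot in box $i$ iff $i \in \Des(S)$) against the rectangular definition (dot in box $i$ iff $i \in \Des(\iota(S))$).

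Next, the index $i = n$. Here I would invoke Theorem~\ref{theorem: embedding preserves promotion}: since $\iota \circ \partial(S) = \partial \circ \iota(S)$, we have $1 \in \Des(\partial(\iota(S)))$ iff $1 \in \Des(\iota(\partial(S)))$, and by the ordinary-range argument just established (applied to $\partial(S) \in SYT(sc_k)$ and the index $1$), this holds iff $1 \in \Des(\partial(S))$. Comparing: box $n$ of $\extDes(\iota(S))$ is dotted iff $n \in \extDes(\iota(S))$ iff $1 \in \Des(\partial(\iota(S)))$ (Definition~\ref{definition: extended descent}), while box $n$ of $\extDes(S)$ is dotted iff $1 \notin \Des(\partial(S))$. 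So I must be careful: the staircase convention puts the dot in box $n$ when $1 \notin \Des(\partial(S))$, whereas the rectangular convention puts it in box $n$ when $1 \in \Des(\partial(\iota(S)))$. These look opposite, so the statement $\extDes(S) = \extDes(\iota(S))$ forces $1 \in \Des(\partial(\iota(S)))$ to be equivalent to $1 \notin \Des(\partial(S))$ — i.e. it is \emph{not} the naive equality from Theorem~\ref{theorem: embedding preserves promotion}. This is exactly where Lemma~\ref{lemma: promotion path and dual-promotion path} must enter: whether the promotion path of $\iota(S)$ creates a descent at $1$ (equivalently, whether $n+1$ lands strictly south of $1$, equivalently whether the promotion path of $\iota(S)$ ends with a vertical move) is governed by where the promotion path exits the lower staircase portion, which by the earlier Proposition passes through the box of $n$ and corresponds to the dual-promotion path of $\epsilon(S)$; Lemma~\ref{lemma: promotion path and dual-promotion path} translates the direction of the last step of the promotion path of $S$ into the relative position of the corner of $n$ in $\epsilon^*(S)$ versus $S$, which is precisely the geometric data recorded by whether box $n$ of $\iota(S)$'s rectangular $\extDes$ is dotted.

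Finally, for the complementary range $n+1 \le i \le 2n-1$: both definitions make the second half the exact complement of the first half (this is stated explicitly for staircase $\extDes$ in the paragraph before Theorem~\ref{theorem: edv and iota}, and for rectangular $\extDes$ it follows from Theorem~\ref{theorem: pordv} together with the fact that $\partial^{2n}(\iota(S)) = \iota(S)$ — or more directly from Rhoades' result that exactly one of boxes $i$ and $n+i$ is dotted). Once the first halves agree and both second halves are determined as complements of the first halves, the two arrays coincide. I expect the main obstacle to be the $i = n$ case: reconciling the sign conventions and verifying, via Lemma~\ref{lemma: promotion path and dual-promotion path} and the Proposition on promotion paths through the box of $n$, that the geometric condition (corner of $n$ in $\epsilon^*(S)$ northeast vs.\ southwest of corner of $n$ in $S$) matches up correctly with ``$1 \in \Des(\partial(\iota(S)))$''. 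The other two ranges are essentially bookkeeping.
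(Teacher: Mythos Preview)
Your plan has the right overall shape, but two of the three ranges contain genuine errors.

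\medskip
\textbf{The range $n+1\le i\le 2n-1$.} Your argument here is circular. You assert that for a rectangular tableau of size $2n$ ``exactly one of boxes $i$ and $n+i$ is dotted,'' and attribute this to Rhoades. That is not a general fact about rectangular $\extDes$: for instance $R=\young(13,24,56)\in SYT(2^3)$ has $\extDes(R)=\{1,4,6\}$, and boxes $1$ and $4$ are both dotted. Theorem~\ref{theorem: pordv} only says promotion rotates the vector; combined with $\partial^{2n}=\mathrm{id}$ it gives nothing about half-period complementarity. The complementarity of the two halves of $\extDes(\iota(S))$ is precisely the content of the theorem on this range, so it cannot be assumed. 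The paper instead computes directly from the construction of $\iota$: because the lower staircase is $\epsilon(S)$ rotated by $\pi$ and complemented, one has $n+i\in\Des(\iota(S))\iff n-i\in\Des(\epsilon(S))$, and then proves $i\in\Des(S)\iff n-i\notin\Des(\epsilon(S))$ via reading words, the $\sharp$ involution (Remark~\ref{remark: RSK evacuation}), and Fact~\ref{fact: transpose}. That descent duality between $S$ and $\epsilon(S)$ is the real work on this range, and your outline skips it entirely.

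\medskip
\textbf{The index $i=n$.} You write ``box $n$ of $\extDes(\iota(S))$ is dotted iff $n\in\extDes(\iota(S))$ iff $1\in\Des(\partial(\iota(S)))$.'' The second ``iff'' is wrong: the rectangle $\iota(S)$ has $2n$ cells, so by Definition~\ref{definition: extended descent} the extended clause applies only at index $2n$, not at $n$. Box $n$ is dotted iff $n\in\Des(\iota(S))$, a plain descent. Your derivation ``$1\in\Des(\partial(\iota(S)))\iff 1\in\Des(\partial(S))$'' is correct, but it settles box $2n$, not box $n$; so the ``opposite signs'' puzzle you encounter is an artifact of this index confusion. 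The actual statement needed at $i=n$ is
\[
n\in\Des(\iota(S))\ \Longleftrightarrow\ 1\notin\Des(\partial(S)),
\]
which the paper proves by translating $1\in\Des(\partial(S))$ into ``the promotion path of $S$ ends vertically,'' applying Lemma~\ref{lemma: promotion path and dual-promotion path} to locate the $n$-corner of $\epsilon^*(S)$ relative to that of $S$, and reading off $n\in\Des(\iota(S))$ from Remark~\ref{remark: equivalent formulation of iota}. You gesture at this geometric picture at the end of your second paragraph, but the logical chain is tangled because it is attached to the wrong equivalence.

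\medskip
Your treatment of $1\le i\le n-1$ is fine, and your argument via Theorem~\ref{theorem: embedding preserves promotion} correctly handles box $2n$ (even though you filed it under box $n$).
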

\begin{proof}
Parsing through the construction of $\iota$, we see that this claim is the conjunction
of the following two statements:
\begin{enumerate} 
  \item for $i \not = n$, $i \in \Des(S)$ if and only if $n-i \not\in \Des(\epsilon(S))$; and
  \item $1 \in \Des(\partial(S))$ if and only if $n \not \in \Des(\iota(S))$.
\end{enumerate} 
For the first statement, we note that $i \in \Des(S)$ is equivalent to that $i$ is a left 
descent of a reading word $w_S$ of $S$ (Remark~\ref{remark: reading word descent}), which
is equivalent to that $n-i$ is a left descent of the word $w_s^\sharp$ 
(Remark~\ref{remark: RSK evacuation}), which is equivalent to that $n-i$ is a descent
in $\epsilon^*(S)$, which is equivalent to that $n-i$ is not a descent of $\epsilon(S)$ 
(Fact~\ref{fact: transpose}).

Now, $1 \in \Des(\partial(S))$ is equivalent to that the promotion path of $S$ ends with
a vertical (up) move, which is equivalent to that the corner $n$ in $\epsilon^*(S)$ is
northeast of the corner of $n$ in $S$ by Lemma \ref{lemma: promotion path and dual-promotion path}
, which is equivalent to that $n \not \in \iota(S)$
(Remark~\ref{remark: equivalent formulation of iota}).

\end{proof}



The above Theorems~\ref{theorem: edv and iota},  ~\ref{theorem: embedding preserves promotion} and  
~\ref{theorem: pordv} imply the following analogy to Theorem~\ref{theorem: pordv}  
for staircase tableaux:
\begin{theorem} \label{theorem: posdv}
If $S$ is a standard tableau of staircase shape, then
promotion $\partial$ rotates $\extDes(S)$ to the right for one position. 
\end{theorem}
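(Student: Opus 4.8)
The plan is to derive Theorem~\ref{theorem: posdv} directly from the already-established facts, with essentially no new tableau combinatorics. The key point is that Theorem~\ref{theorem: edv and iota} identifies $\extDes(S)$ with $\extDes(\iota(S))$ as arrays of $2n$ dotted/empty boxes (recall $|sc_k| = n$ and $|k^{(k+1)}| = (k+1)k = 2n$), so it suffices to show that promotion on the staircase side corresponds, under $\iota$, to the right-rotation of the $2n$-box descent vector that Rhoades' theorem already gives on the rectangular side.

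First I would record the chain of identifications. By Theorem~\ref{theorem: edv and iota}, $\extDes(S) = \extDes(\iota(S))$ and $\extDes(\partial(S)) = \extDes(\iota(\partial(S)))$. By Theorem~\ref{theorem: embedding preserves promotion}, $\iota(\partial(S)) = \partial(\iota(S))$, so $\extDes(\partial(S)) = \extDes(\partial(\iota(S)))$. Since $\iota(S) \in SYT(k^{(k+1)})$ is rectangular with $|k^{(k+1)}| = 2n$ boxes, Theorem~\ref{theorem: pordv} applies and tells us that $\extDes(\partial(\iota(S)))$ is obtained from $\extDes(\iota(S))$ by rotating to the right by one position (the boxes being indexed mod $2n$, with left and right edges glued). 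Combining, $\extDes(\partial(S))$ is the right-rotation of $\extDes(S)$, which is exactly the assertion.

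The one thing I would make sure to address explicitly is that the circular structure (the gluing of left and right edges) is the same on both sides: the staircase descent vector is defined as a circular array of $2n$ boxes with edges identified, and Theorem~\ref{theorem: edv and iota} matches it box-for-box with the rectangular descent vector of $\iota(S)$, which carries the identical circular structure from Definition~\ref{definition: extended descent} and the discussion following it. So "rotate to the right by one position" has the same meaning in both settings, and the transfer is literal rather than up-to-reindexing. Since all three cited results are already proved, there is no real obstacle here — the statement is a two-line corollary — and the proof I would write is essentially the displayed chain of equalities above, phrased as: apply Theorem~\ref{theorem: edv and iota} to $S$ and to $\partial(S)$, use Theorem~\ref{theorem: embedding preserves promotion} to replace $\iota(\partial(S))$ by $\partial(\iota(S))$, and invoke Theorem~\ref{theorem: pordv} on the rectangular tableau $\iota(S)$.

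\begin{proof}
By Theorem~\ref{theorem: edv and iota}, $\extDes(S) = \extDes(\iota(S))$ and $\extDes(\partial(S)) = \extDes(\iota(\partial(S)))$, where both are viewed as circular arrays of $2n = (k+1)\cdots k$ boxes. By Theorem~\ref{theorem: embedding preserves promotion}, $\iota(\partial(S)) = \partial(\iota(S))$, so
$$ \extDes(\partial(S)) = \extDes(\partial(\iota(S))). $$
Since $\iota(S)$ is a standard tableau of the rectangular shape $k^{(k+1)}$ with $2n$ boxes, Theorem~\ref{theorem: pordv} shows that $\extDes(\partial(\iota(S)))$ is obtained from $\extDes(\iota(S))$ by rotating to the right by one position. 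The circular structure on the $2n$ boxes of the staircase descent vector is, by construction, the same one carried over from $\iota(S)$, so combining the above displays we conclude that $\extDes(\partial(S))$ is the right-rotation of $\extDes(S)$ by one position.
\end{proof}
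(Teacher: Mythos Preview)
Your proposal is correct and matches the paper's approach exactly: the paper states this theorem as an immediate consequence of Theorems~\ref{theorem: edv and iota}, \ref{theorem: embedding preserves promotion}, and \ref{theorem: pordv}, and your argument spells out precisely that chain of implications. There is nothing to add.
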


Note that if we rotate $\extDes(S)$ in any direction by $n$ positions we get the complement
of $\extDes(S)$, which is $\extDes(S^t)$. This agrees with Edelman and Greene's result \cite{e-g} that
$\partial^n(S)=S^t$ and $\partial^n=\partial^{-n}$.

Unlike the case of rectangular tableaux, evacuation $\epsilon$ and dual-evacuation $\epsilon^*$
act differently on staircase tableaux. Their actions on descent vectors are described below:  

\begin{theorem} \label{theorem: eosdv}
Let $S \in SYT(sc_k)$ and $n=(k+1)\cdots k/2$.  Then evacuation $\epsilon$ 
rotates $\extDes(S)$ to the right by one position and then flips the result of the 
rotation. More precisely, the $i$-th box of $\extDes(\epsilon(S))$ is dotted
if and only if the $(2\cdots (n-i))$-th box of $\extDes(S)$ is dotted.  
\end{theorem}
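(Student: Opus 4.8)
The plan is to mimic the proof of Theorem~\ref{theorem: eordv} for the rectangular case, using the staircase analogues of the facts invoked there. First I would recall that $\extDes(S)$ is determined by the data $\{\,i\in\Des(S)\,\}$ for $i\ne n$ together with the single bit $1\in\Des(\partial(S))$ (the rest of the $2n$ boxes being forced by complementation). So it suffices to check two things: that for $i\ne n$ the $i$-th box of $\extDes(\epsilon(S))$ behaves correctly under ``rotate right by one, then flip,'' and that the special bit governing the $n$-th and $2n$-th boxes also transforms correctly.

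For the non-special positions, I would argue exactly as in Theorem~\ref{theorem: eordv}: $i\in\Des(S)$ iff $i$ is a left descent of a reading word $w_S$ of $S$ (Remark~\ref{remark: reading word descent}), iff $n-i$ is a left descent of $w_S^\sharp$ (Remark~\ref{remark: RSK evacuation}), iff $n-i\in\Des(\epsilon^*(S))$. The one wrinkle absent in the rectangular case is that on staircases $\epsilon\ne\epsilon^*$; here I would use Fact~\ref{fact: transpose}, $\epsilon^*(S)=\epsilon(S)^t$, so $n-i\in\Des(\epsilon^*(S))$ iff $n-i\notin\Des(\epsilon(S))$ (a descent of a tableau becomes a non-descent of its transpose). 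Translating to descent-vector language: the $(n-i)$-th box of $\extDes(S)$ is dotted iff $i\in\Des(S)$ (when $i\le n-1$ this is the first half, when we read index $n-i$ in the range giving the complement it is the second half), and one checks that ``$n-i\notin\Des(\epsilon(S))$'' is precisely the statement that the $i$-th box of $\extDes(\epsilon(S))$ is dotted, after matching the two halves of the arrays correctly. The indexing here — keeping straight whether a given residue mod $2n$ lands in the ``descent half'' or the ``complement half'' of each vector — is where I would be most careful; it is bookkeeping but it is the easiest place to be off by the sign of a rotation.

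For the special bit, the argument parallels the last paragraph of Theorem~\ref{theorem: eordv}, but with $\epsilon$ and $\partial$ in the staircase setting. If the $2n$-th box of $\extDes(S)$ is dotted, i.e.\ $1\in\Des(\partial(S))$, then by the non-special case applied to $\partial(S)$ we get $n-1\in\Des(\epsilon^*\circ\partial(S))$, hence $n-1\in\Des(\epsilon(\partial(S))^t)$... more cleanly, I would use Fact~\ref{fact: dihedral one} ($\epsilon\circ\partial=\partial^*\circ\epsilon$) to write $n-1\in\Des(\partial^{-1}\circ\epsilon(S))$ via the reading-word computation, and then Theorem~\ref{theorem: posdv} (promotion rotates $\extDes$ right by one) together with the definition of $\extDes$ on staircases to conclude that the correct box of $\extDes(\epsilon(S))$ is dotted. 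Since $\epsilon$ is an involution (Fact~\ref{fact: involution}), the converse is automatic, which also gives a sanity check: the map ``rotate right by one, then flip'' on an array of $2n$ boxes is an involution, as it must be. The main obstacle, as noted, is not conceptual but the precise index arithmetic mod $2n$ relating the two halves of $\extDes(S)$ and $\extDes(\epsilon(S))$; I would settle it once on a running example (e.g.\ $S_1,S_2$ above) and then state it in general.
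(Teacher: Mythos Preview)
Your plan is viable but takes a different route than the paper. The paper dispatches the theorem in one line via the embedding: by Theorem~\ref{theorem: edv and iota} and Theorem~\ref{theorem: embedding preserves evacuation} one has $\extDes(\epsilon(S))=\extDes(\iota(\epsilon(S)))=\extDes(\epsilon(\iota(S)))$, and the already-proved rectangular result (Theorem~\ref{theorem: eordv}) applied to $\iota(S)\in SYT(k^{k+1})$ finishes, since $\extDes(\iota(S))=\extDes(S)$; no index bookkeeping is redone. Your plan instead reruns the proof of Theorem~\ref{theorem: eordv} intrinsically on the staircase, substituting the staircase identity $\epsilon^*(S)=\epsilon(S)^t$ (Fact~\ref{fact: transpose}) for the rectangular identity $\epsilon=\epsilon^*$. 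That works, but note that your non-special case is exactly statement~(1) in the proof of Theorem~\ref{theorem: edv and iota}, so you are re-deriving that lemma's content; and for the special bit you still invoke Theorem~\ref{theorem: posdv}, which in this paper is itself proved through the embedding, so $\iota$ is not actually bypassed. (There is also a sign slip in your sketch: applying your own non-special case to $\partial(S)$ yields $n-1\notin\Des(\epsilon\circ\partial(S))=\Des(\partial^{-1}\circ\epsilon(S))$, not $\in$ --- precisely the half-versus-complement bookkeeping you flag, and it does resolve correctly once tracked carefully.) The payoff of the paper's route is brevity: the descent comparisons are already packaged into Theorem~\ref{theorem: edv and iota} and Theorem~\ref{theorem: eordv}, whereas your route reopens them.
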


\begin{theorem} \label{theorem: deosdv}
Let $S \in SYT(sc_k)$ and $n=(k+1)\cdots k/2$.  Then dual-evacuation $\epsilon^*$ 
rotates $\extDes(S)$ to the right by $n-1$ position and then flips the result of the 
rotation. More precisely, the $i$-th box of $\extDes(\epsilon(S))$ is dotted
if and only if the $(n-i)$-th box of $\extDes(S)$ is dotted.  
\end{theorem}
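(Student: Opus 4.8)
\textbf{Proof proposal for Theorem~\ref{theorem: deosdv}.}

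The plan is to deduce this from Theorem~\ref{theorem: eosdv} together with Fact~\ref{fact: transpose} and the observation already recorded after Theorem~\ref{theorem: posdv} that rotating $\extDes(S)$ by $n$ positions (in either direction) yields $\extDes(S^t)$. Recall that Fact~\ref{fact: transpose} gives $\epsilon^*(S) = \epsilon(S)^t$ for any $S \in SYT(sc_k)$, so $\extDes(\epsilon^*(S)) = \extDes(\epsilon(S)^t)$, which by the complementation remark equals $\extDes(\epsilon(S))$ rotated by $n$. Applying Theorem~\ref{theorem: eosdv}, the $i$-th box of $\extDes(\epsilon(S))$ is dotted iff the $(2(n-i))$-th (mod $2n$) box of $\extDes(S)$ is dotted; composing with a rotation by $n$ should yield: the $i$-th box of $\extDes(\epsilon^*(S))$ is dotted iff the $(n-i)$-th (mod $2n$) box of $\extDes(S)$ is dotted, which is exactly the claimed description (rotate right by $n-1$, then flip, since $i \mapsto 2(n-1) - i + n \equiv n-i$ — I would double-check this index bookkeeping carefully).

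First I would pin down the exact correspondence between the two "flip and rotate" descriptions and the clean modular formula "box $i$ of the image is box $f(i)$ of the source," for both $\epsilon$ and $\epsilon^*$, so that composition becomes ordinary composition of affine maps on $\mathbb{Z}/2n\mathbb{Z}$. Then I would verify that $\extDes$ of a transpose is the rotation-by-$n$ complement: this follows directly from the defining rules for $\extDes$ on staircase tableaux, since transposing swaps "$i \in \Des$" with "$i \notin \Des$" for $i \neq n$, and swaps the role of the $n$-th and $2n$-th boxes (one should check the edge case $i = n$ using $\partial^n(S) = S^t$ from Fact~\ref{fact: promotion on staircase}, i.e. that $1 \in \Des(\partial(S^t))$ iff $1 \notin \Des(\partial(S))$, which itself follows from Theorem~\ref{theorem: posdv} applied $n$ times). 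With these two facts in hand, chaining $\extDes(\epsilon^*(S)) = \mathrm{rot}^n\bigl(\extDes(\epsilon(S))\bigr)$ through Theorem~\ref{theorem: eosdv} and simplifying the resulting affine map on $\mathbb{Z}/2n\mathbb{Z}$ gives the theorem.

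Alternatively — and this may be cleaner — one can argue directly the way Theorem~\ref{theorem: eosdv} presumably is proved: use Fact~\ref{fact: dihedral one} in the form $\epsilon^* = \partial^{-1} \circ \epsilon^* \circ \partial$, or use $\epsilon^* \circ \partial = \partial^* \circ \epsilon^*$ together with Theorem~\ref{theorem: posdv}, to reduce the "extended" part ($i = n$ and $i = 2n$) to the ordinary descent part, and handle the ordinary descent part via the reading-word/$\sharp$ description (Remarks~\ref{remark: reading word descent} and~\ref{remark: RSK evacuation}): $i \in \Des(S)$ iff $n - i$ is a left descent of $w_S^\sharp$ iff $n-i \in \Des(\epsilon^*(S))$. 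The extended-descent bookkeeping at the two "boundary" boxes is where I expect the only real friction — one must carefully track how the $n$-th and $2n$-th boxes transform under $\epsilon^*$, using the characterization of the $2n$-th box in terms of $\Des(\partial(S))$ and Lemma~\ref{lemma: promotion path and dual-promotion path} — but this is a finite case check rather than a genuine obstacle. The main obstacle, such as it is, is simply getting every modular index and every "flip" orientation consistent across the chain of reductions; there is no conceptual difficulty once Theorems~\ref{theorem: eosdv} and~\ref{theorem: posdv} and Fact~\ref{fact: transpose} are available.
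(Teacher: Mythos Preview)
Your first approach is exactly the paper's proof: the paper dispatches Theorem~\ref{theorem: deosdv} in a single sentence, namely that it ``follows from the fact that $\extDes(S)$ is the complement of $\extDes(S^t)$,'' which is precisely your chain $\extDes(\epsilon^*(S)) = \extDes(\epsilon(S)^t) = \mathrm{rot}^n\bigl(\extDes(\epsilon(S))\bigr)$ combined with Theorem~\ref{theorem: eosdv}. Your proposed verification of the complementation identity $\extDes(S^t) = \mathrm{rot}^n(\extDes(S))$ is more careful than the paper, which simply quotes the remark after Theorem~\ref{theorem: posdv}; your alternative direct route via reading words and Fact~\ref{fact: dihedral one} would also work but is more labor than needed given that Theorem~\ref{theorem: eosdv} is already available.
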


\begin{example}
Let $S_3=\young(124,36,5)$, then 
    $\epsilon(S_3)=\young(135,24,6)$ and
    $\epsilon^*(S_3)=\young(126,34,5)$. Correspondingly,
$$\extDes(S_3)=\young(\hfil\bullet\hfil\bullet\hfil\hfil\bullet\hfil\bullet\hfil\bullet\bullet),$$ 
$$\extDes(\epsilon(S_3))=
   \young(\bullet\hfil\bullet\hfil\bullet\hfil\hfil\bullet\hfil\bullet\hfil\bullet),$$ 
and
$$\extDes(\epsilon^*(S_3))=
   \young(\hfill\bullet\hfill\bullet\hfil\bullet\bullet\hfil\bullet\hfil\bullet\hfil).$$ 

Note that $\extDes(\epsilon(S_3))$ is the complement of $\extDes(\epsilon^*(S_3))$.
This agrees with the fact that $\epsilon^*(S_3) = \epsilon(S_3)^t$. 
\end{example}


\begin{proof}[Proof of Theorem~\ref{theorem: eosdv} and \ref{theorem: deosdv}]
Theorem~\ref{theorem: eosdv} follows directly from Theorem~\ref{theorem: edv and iota} and
Theorem~\ref{theorem: embedding preserves evacuation}. 

Theorem~\ref{theorem: deosdv} follows from the fact that $\extDes(S)$ is the complement of
$\extDes(S^t)$.
\end{proof}

Theorems~\ref{theorem: pordv} and \ref{theorem: posdv} imply that if $T$ is either a
rectangular or staircase tableau, $\extDes(T)$ encodes important information about the promotion 
cycle that $T$ is in.

\begin{corollary}
If $T$, either of rectangular or staircase shape, is in a promotion cycle of size $C$ 
then $\extDes(T)$ must be periodic with period dividing $C$. (The period does not have to 
be exactly $C$.)
\end{corollary}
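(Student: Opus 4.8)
The plan is to deduce everything from the single fact that $\partial$ acts on the descent vector by a rotation, established in Theorem~\ref{theorem: pordv} for rectangular shapes and Theorem~\ref{theorem: posdv} for staircase shapes. Let $N$ denote the length of the cyclic array $\extDes(T)$: so $N=n$ if $T$ has rectangular shape $r^c$ with $n=cr$, and $N=2n$ if $T$ has staircase shape with $n=(k+1)k/2$. In either case $\extDes(T)$ is a $0$--$1$ array of length $N$ with its two ends identified, and in either case $\partial$ sends $\extDes(T)$ to its rotation to the right by one position.

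First I would iterate this: since rotation by one position generates the full cyclic rotation group of $\mathbb{Z}/N\mathbb{Z}$ acting on such arrays, $\extDes(\partial^m(T))$ is $\extDes(T)$ rotated to the right by $m$ positions (indices read mod $N$) for every $m\ge 0$. Now suppose $T$ lies in a promotion cycle of size $C$, i.e.\ $\partial^C(T)=T$. Then $\extDes(T)=\extDes(\partial^C(T))$ equals $\extDes(T)$ rotated by $C$, so $\extDes(T)$ is fixed by rotation by $C$ positions. The set of $m\in\mathbb{Z}/N\mathbb{Z}$ whose rotation fixes $\extDes(T)$ is a subgroup of $\mathbb{Z}/N\mathbb{Z}$, hence equals $d\,\mathbb{Z}/N\mathbb{Z}$ where $d\mid N$ is precisely the (minimal) period of the cyclic array $\extDes(T)$. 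Since $C$ belongs to this subgroup, $d\mid C$; this is exactly the assertion that the period of $\extDes(T)$ divides $C$, and it shows concretely that the period is $\gcd(C,N)$.

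For the parenthetical remark that the period need not be exactly $C$, I would only exhibit an example, which is immediate from the already-noted non-injectivity of $\extDes$ (see the comment following Theorem~\ref{theorem: pordv}): two tableaux occupying positions in the same promotion cycle that are not congruent modulo the true period, yet with equal descent vectors, force a proper period; one can read such an example off the rectangular tableaux $R_1$, $R_2$, $R_3$ or the staircase tableaux $S_1$, $S_2$ computed above. I do not expect a real obstacle in this proof; the only points that need care are getting the length $N$ right in the two shape cases and justifying the elementary group-theoretic step that a cyclic array fixed by rotation by $C$ has period dividing $C$ — both of which are routine.
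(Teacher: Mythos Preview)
Your argument is correct and is exactly what the paper intends: the corollary is stated there without proof, as an immediate consequence of Theorems~\ref{theorem: pordv} and \ref{theorem: posdv}, and you have simply written out the one-line deduction (rotation by $C$ fixes $\extDes(T)$, hence the minimal period divides $C$).

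One small slip: the clause ``it shows concretely that the period is $\gcd(C,N)$'' is not right. You have shown $d\mid C$ and $d\mid N$, hence $d\mid\gcd(C,N)$, but equality can fail; indeed the paper's own example immediately following the corollary has $C=4$, $N=12$, and period $d=2\neq\gcd(4,12)=4$. (Since in fact $C\mid N$ in both shape cases, your formula would force $d=C$, contradicting the very parenthetical you are trying to justify.) Drop that clause and the proof is clean. For the parenthetical, the examples $R_1,R_2,R_3,S_1,S_2$ you cite do not obviously exhibit a strict divisor; it is safer to point to the explicit example the paper gives right after the statement.
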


\begin{example}
Let $T=\young(159,26\ten,37\eleven,48\twelv)$, then 
$$\extDes(T)=\young(\bullet\bullet\bullet\hfil\bullet\bullet\bullet\hfil\bullet\bullet\bullet\hfil).$$
We see that $\extDes(T)$ has a period of 4, thus $T$ must be in a promotion cycle of size either
4 or 12. Indeed, the promotion order of $T$ is 4.

On the other hand, the promotion order of $T=\young(135,279,48\eleven,6\ten\twelv)$ is
also 4, while its descent vector 
$$\extDes(T)=\young(\bullet\hfil\bullet\hfil\bullet\hfil\bullet\hfil\bullet\hfil\bullet\hfil)$$
has period 2.
\end{example}

Equipped with the above knowledge, we can say more about the promotion action on $SYT(sc_k)$.
For example: 

\begin{corollary} \label{corollary: full cycle}
In the promotion action on $SYT(sc_k)$
there always exists a full cycle, that is, a cycle of the same size as
the order of the promotion $\partial$, in this case $(k+1)\cdots k$. 
\end{corollary}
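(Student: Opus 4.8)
The plan is to exhibit a single standard tableau whose descent vector $\extDes(S)$ has trivial periodicity, i.e.\ period exactly $2n = (k+1)k$, where $n = |sc_k|$. By Theorem~\ref{theorem: posdv}, promotion rotates $\extDes(S)$ to the right by one position, so the promotion order of $S$ is a multiple of the period of $\extDes(S)$ (viewed as a necklace of $2n$ beads); since the promotion order always divides $2n$ (Fact~\ref{fact: promotion on staircase}), if the period of $\extDes(S)$ is $2n$ then the promotion cycle containing $S$ has size exactly $2n = (k+1)k$, which is the order of $\partial$, and we are done.

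\medskip

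First I would recall that, by definition of $\extDes(S)$ for staircase shapes, the second half of the vector is the complement of the first half (box $i$ is dotted iff box $n+i$ is empty, for $1 \le i \le n$), except that the pair of boxes $\{n, 2n\}$ is governed by whether $1 \in \Des(\partial(S))$. So a vector of length $2n$ arising as some $\extDes(S)$ is automatically ``anti-periodic'' under the shift by $n$: rotating by $n$ gives the complement. This means its period (as a plain necklace) is never a proper divisor $d$ of $n$; the only candidate periods are $2n$ itself and possibly $2n/m$ for odd $m \mid n$ with $2n/m \nmid n$. To get period exactly $2n$, it therefore suffices to choose $S$ so that the length-$n$ string $(\,\mathbf{1}[1\in\Des(S)], \mathbf{1}[2\in\Des(S)], \ldots, \mathbf{1}[(n-1)\in\Des(S)], \mathbf{1}[1\in\Des(\partial(S))]\,)$ is itself aperiodic as a binary necklace of length $n$ — in fact it suffices that this length-$n$ string is not invariant under any nontrivial cyclic rotation, which in turn is guaranteed if, say, it contains exactly one dot, or its dots are ``generic''.

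\medskip

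Concretely, I would take $S$ to be the row-superstandard staircase tableau, $S[i,j] = $ the $j$-th entry of row $i$ filled left-to-right, top-to-bottom (i.e.\ $1,2,\ldots,k$ in row $1$; $k+1,\ldots,2k-1$ in row $2$; and so on). Then $\Des(S) = \{k, k + (k-1), k + (k-1) + (k-2), \ldots\}$, the partial sums, which is a very sparse and ``irregular'' set; I would then compute $\Des(\partial(S))$ (or just check whether $1 \in \Des(\partial(S))$, i.e.\ whether $2$ lies strictly south of $1$ in $\partial(S)$, using the explicit promotion path of $S$) and verify directly that the resulting length-$n$ binary string has no nontrivial rotational symmetry. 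For this tableau the descent positions are $k, 2k-1, 3k-3, \ldots$, whose consecutive gaps $k, k-1, k-2, \ldots$ are all distinct, so the descent pattern cannot be rotation-invariant — a rotation by $t \ne 0$ would have to preserve the multiset of gaps between consecutive dots, which is impossible since all gaps are different (and the ``wrap-around'' gap is also pinned down). That rules out every nontrivial period, giving period exactly $2n$ for $\extDes(S)$ and hence a full promotion cycle.

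\medskip

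The main obstacle I anticipate is the low-level bookkeeping needed to pin down the single bit ``is $1 \in \Des(\partial(S))$'' and, more importantly, to argue cleanly that an aperiodic gap sequence forces the full length-$n$ necklace (dots plus the wrap bit) to be aperiodic — one has to be careful that the extra bit in position $n$ (coming from $\partial(S)$ rather than from $\Des(S)$ directly) does not accidentally restore a symmetry, and that the anti-periodicity under shift-by-$n$ is combined correctly with necklace-period arguments over the full length $2n$. An alternative, cleaner route that avoids computing $\partial(S)$ at all is to invoke Theorem~\ref{theorem: edv and iota} together with Rhoades' Theorem~\ref{theorem: pordv}: $\extDes(S) = \extDes(\iota(S))$ and $\iota(S) \in SYT(k^{k+1})$, and one can instead analyze the rectangular descent vector and use known structure of promotion cycles on rectangles; but the direct gap-sequence argument above seems shortest, with the periodicity/anti-periodicity juggling being the only delicate point.
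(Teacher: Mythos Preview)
Your strategy---exhibit a single $S\in SYT(sc_k)$ whose descent vector $\extDes(S)$ has full period $2n$---is exactly the paper's; the paper uses the \emph{column}-superstandard tableau, and your row-superstandard tableau is its transpose, so the two descent vectors are complementary and have the same period.

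Where your argument breaks is the reduction step.  You assert that it suffices for the length-$n$ first half $w$ of $\extDes(S)$ to be aperiodic as a necklace on $\mathbb{Z}/n$.  That implication is false in general.  Take any odd $n$ and let $v\in\{0,1\}^{2n}$ be the alternating word $0101\cdots 01$.  Then $v_{i+n}=1-v_i$ (since $n$ is odd), so $v$ has the required ``first-half/second-half are complements'' structure; its first half $w=0101\cdots 0$ is aperiodic on $\mathbb{Z}/n$ (any period would have to be even, yet must divide the odd number $n$); and nevertheless $v$ has period~$2$.  What actually has to be ruled out is an \emph{anti-period} $e<n$ of $v$ (i.e.\ $v_{i+e}=1-v_i$ for all $i$), which is equivalent to $w$ admitting a block decomposition $B\,\bar B\,B\,\bar B\cdots B$ with $e\mid n$ and $n/e$ odd---a condition logically independent of $w$ being aperiodic on $\mathbb{Z}/n$.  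Your distinct-gap argument establishes only the latter.

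For your chosen $S$ the conclusion \emph{is} correct, but a different verification is needed.  One clean route is to look at the cyclic run-length word of $v$ on $\mathbb{Z}/2n$: for the row-superstandard tableau this word is
\[
\bigl(k{-}1,\,1,\,k{-}2,\,1,\,\ldots,\,2,\,1,\,1,\,1,\,1\bigr)
\]
repeated twice, of total length $4k{-}2$.  The value $k{-}1$ appears exactly once in each copy, so this cyclic word has minimal period $2k{-}1$.  Since $2k{-}1$ is odd, any nontrivial cyclic shift of $v$ by a whole number of runs sends dot-runs to empty-runs (it complements rather than fixes $v$); hence $v$ has no period smaller than $2n$.
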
 
\begin{proof}
Consider $T \in SYT(sc_k)$ obtained by filling the numbers $1$ to $(k+1)\cdots k/2$ down columns,
from leftmost column to rightmost column. Then $\extDes(T)$ has period of $(k+1)\cdots k$, thus 
$T$ must be in a full cycle.  

\end{proof}

For example for $k=3$, and $T=\young(146,25,3)$,
$$\extDes(T)=\young(\bullet\bullet\hfil\bullet\hfil\bullet\hfil\hfil\bullet\hfil\bullet\hfil)$$
has period 12.

Indeed, computer experiments show that ``most'' cycles of the promotion action are
full cycles.

\begin{corollary} \label{corollary: no half cycle}
In the promotion action on $SYT(sc_k)$,
let $N=(k+1)\cdots k$. If a cycle of length $C$ appears, then $C$ is a divisor of $N$, but
not a divisor of $N/2$.
\end{corollary}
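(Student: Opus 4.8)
The plan is to combine two facts. The first is Fact~\ref{fact: promotion on staircase}, which gives $\partial^N = (\,\cdot\,)^t$ as an operator on $SYT(sc_k)$, where $N = (k+1)\cdot k$; this operator is not the identity (transposition is a fixed-point-free involution on $SYT(sc_k)$ for $k>2$, since a staircase tableau and its transpose never coincide — if $i$ sits in box $(a,b)$ then $i$ sits in box $(b,a)$ of the transpose, and a staircase shape has no box on the main diagonal only for... actually every box $(a,a)$ with $a+a\le k+1$ is on the diagonal; the point is that $T=T^t$ would force a symmetric standard filling, which is impossible because the two boxes $(1,2)$ and $(2,1)$ would have to carry the same entry). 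Hence the order of $\partial$ on any single orbit divides $N$ but, because $\partial^{N/2}=\partial^N\cdot\partial^{-N/2}$ relates to transpose in a way that still cannot be the identity, no orbit has length dividing $N/2$.

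More precisely, the first step is to record that $C \mid N$: this is immediate from Fact~\ref{fact: promotion on staircase}, since $\partial^{2n}=\partial^N$ is the identity on $SYT(sc_k)$. The second and main step is to rule out $C \mid N/2 = n$. Suppose for contradiction that $T$ lies in an orbit of length $C$ with $C \mid n$; then $\partial^n(T) = T$. But Fact~\ref{fact: promotion on staircase} also says $\partial^n(T) = T^t$, so we would need $T = T^t$. I would then show no standard Young tableau of staircase shape $sc_k$ with $k>2$ is symmetric under transposition: the entry $1$ always occupies box $(1,1)$, the entry $2$ occupies either $(1,2)$ or $(2,1)$, and transposition swaps these two positions while fixing $(1,1)$; since $(1,2)$ and $(2,1)$ are distinct boxes of $sc_k$ (this uses $k \ge 2$), no standard filling can have $T=T^t$. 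This contradiction completes the argument.

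Alternatively — and this is the slicker route I would actually write — I would invoke the descent-vector machinery already developed: by Theorem~\ref{theorem: posdv}, $\partial$ rotates $\extDes(T)$ (a length-$N$ cyclic word) one step to the right, and by the remark following Theorem~\ref{theorem: posdv}, rotating by $n = N/2$ yields the \emph{complement} of $\extDes(T)$. A length-$N$ binary cyclic word is never equal to its own complement (the two have different numbers of dots, unless... no: complementation reverses the dot/blank count in each position, so a word equal to its complement would need every position both dotted and not — impossible). Hence $\partial^n(T) \ne T$, so the orbit length $C$ does not divide $n = N/2$. Combined with $C \mid N$ from Fact~\ref{fact: promotion on staircase}, this is exactly the claim.

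The main obstacle is essentially bookkeeping: making sure the "rotate by $n$ gives the complement" statement is cleanly justified (it follows from $\extDes(S^t)$ being the complement of $\extDes(S)$ together with Fact~\ref{fact: promotion on staircase}), and noting that a binary word and its complement are never equal, which forces $\partial^n$ to move every tableau. Everything else is a direct consequence of results already in hand.
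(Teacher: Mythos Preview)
Your proposal is correct, and the route you call ``slicker'' --- using Theorem~\ref{theorem: posdv} to see that $\partial^{n}$ rotates $\extDes(T)$ by $n=N/2$, which produces the complement and hence cannot fix $\extDes(T)$ --- is exactly the paper's own argument, phrased there as ``$\extDes(T)$ can never have period of length that is a divisor of $N/2$.''

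Your first route, however, is worth noting as a genuine alternative: it bypasses the descent-vector machinery entirely and appeals directly to Fact~\ref{fact: promotion on staircase}. If $C\mid n$ then $\partial^{n}(T)=T$, while Fact~\ref{fact: promotion on staircase} gives $\partial^{n}(T)=T^{t}$; the observation that no standard filling of $sc_k$ (for $k\ge 2$) can satisfy $T=T^{t}$, since the off-diagonal boxes $(1,2)$ and $(2,1)$ would be forced to carry the same entry, finishes it. This is shorter and more elementary than the paper's proof, which relies on Theorem~\ref{theorem: posdv} (and hence, upstream, on the embedding $\iota$ and Theorem~\ref{theorem: edv and iota}). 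The paper's approach has the advantage of staying within the descent-vector framework it has just built, making the corollary feel like a natural dividend of that machinery; your direct argument shows the result is really already contained in Edelman--Greene's Fact~\ref{fact: promotion on staircase} alone.
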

\begin{proof}
The cycle size $C$ is a divisor of $N$ since the order of promotion $|\partial|$ is $N$.

On the other hand, $C$ cannot be a divisor of $N/2$ since by definition $\extDes(T)$ can never have period of
length that is a divisor of $N/2$. 
\end{proof}

\section{Some comments and questions} \label{section: future}
The discovery of $\iota$ is a by-product of our attempt to solve an open question
posed by Stanley (\cite[page 13]{RS:2008}) that asks if Rhoades' CSP result
on rectangular tableaux can be extended to other shapes, and if there is a 
more combinatorial proof of this result. 

Rhoades' proof uses Kazhdan-Lusztig theory, requiring special properties of
rectangular tableaux. It seems (to us) that there is not an obvious analogous proof
for other shapes.  

So we decided to try our luck in computer exploration using Sage-Combinat 
(\cite{sage}, \cite{Sage-Combinat}). The first thing we noticed from the computer data 
was the nice promotion cycle structure of staircase
tableaux, which is not a surprise at all due to Fact~\ref{fact: promotion on staircase}.
Thus we decided to focus on Problem~\ref{problem 2}.  

It was soon clear to us that brute-force computation of the cycle structure could not proceed
very far; we could only handle $SYT(sc_k)$ for $k \le 5$ on our computer. 
On the other hand, the promotion cycle 
structures on rectangular tableaux are extremely easy to compute by Rhoades' result,
as the generating function of $\overline{\maj}$ is just the $q-$analogue of the hook 
length formula. So the embedding $\iota$ is an effort to study the promotion cycle 
structure on $SYT(sc_k)$ by borrowing information from the promotion action on $SYT(k^{k+1})$. 

Among the cases of promotion action on $SYT(sc_k)$ for which we know the complete cycle
structure (that is, $k = 3,4,5$), we have found that each has a CSP polynomial that is
a product of cyclotomic polynomials of degree $\le (k+1)\cdots k$: For $k=3,4,5$, these
polynomials are 

$$\Phi_2\Phi_4^2\Phi_6\Phi_8\Phi_{12},$$
$$\Phi_2^3\Phi_3\Phi_4^2\Phi_8\Phi_{10}^2\Phi_{16}\Phi_{20}, \text{ and}$$
$$\Phi_2^{11}\Phi_6\Phi_{10}^3\Phi_{11}\Phi_{13}\Phi_{22}\Phi_{24}^4\Phi_{30},$$
respectively.

We note that these polynomials in product form are not unique, for example 
$$\Phi_2^2\Phi_4\Phi_6\Phi_{10}\Phi_{12}$$ gives another CSP polynomial for 
$SYT(sc_3)$. 

The study of this product form continues, with the hope of finding a counting formula
the $q-$analogue of which is a CSP polynomial for the promotion action on $SYT(sc_k)$.  

For the case $k>5$, Corollary~\ref{corollary: no half cycle} gives a necessary
condition for what kind of cycles can appear in the promotion action on $SYT(sc_k)$.
We do not know how sufficient this condition is. 

The question that interests us the most is if the embedding has any 
representation-theoretical interpretation, and if such an interpretation can 
help settle Problem~\ref{problem 2}. 

\section*{acknowledgements}
The authors want to thank Anne Schilling for her scholarly and financial support of 
this work; the initial idea that such an embedding may exist was first suggested to us by
Anne. We want to thank Vic Reiner for providing references and commenting on an earlier version 
of this paper. We also would like to thank Andrew Berget and Richard Stanley for 
discussion on this topic, and Nicolas M. Thi\'ery for his 
inspiration with regards to computer exploration. 

\appendix
\section{Proof of Lemma~\ref{lemma: promotion path and dual-promotion path}} 

To prove Lemma~\ref{lemma: promotion path and dual-promotion path}, we first make
the observation that the location of the corner that contains $n$ (the $n$-corner) in $S$ cannot 
be the same as the $n$-corner in $\epsilon^*(S)$. This is because, as we had observed in 
Lemma~\ref{lemma: key observation}, the $n$-corner in $S$ is the same as the
$n$-corner in $\epsilon^*\circ\partial(S)$; and 
$\epsilon^*(S)=\partial\circ\epsilon^*\circ\partial(S)$ does not have the same
$n$-corner as that of $\epsilon^*\circ\partial(S)$.

With this observation, Lemma~\ref{lemma: promotion path and dual-promotion path}
is a consequence of the following general fact:

\begin{lemma} 
Let $T \in SYT(\lambda)$ and $\lambda \vdash n$. 
If the promotion path of $T$ ends with a vertical (up) move, 
then the whole dual-promotion path of $T$ must be (weakly) northeast of the promotion path. 

If the promotion path of $T$ ends with a horizontal (left) move, 
then the whole dual-promotion path of $T$ must be (weakly) southwest of the promotion path.
\end{lemma}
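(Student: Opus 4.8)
The plan is to analyze the interaction between the promotion path and the dual-promotion path of a fixed $T \in SYT(\lambda)$ by comparing them row by row (or, equivalently, by tracking where each path crosses each horizontal grid line). Recall that the promotion path starts at the $n$-corner and moves northwest via jeu-de-taquin until it reaches an inside corner of $\lambda$, while the dual-promotion path starts at the $1$-corner (the box $(1,1)$, always an inside corner) and moves southeast until it reaches an outside corner. Both paths are monotone lattice paths in the appropriate sense: the promotion path, read from its end $(1,1)$-ish endpoint to its start, is a path that only goes right or down; the dual-promotion path from $(1,1)$ only goes right or down as well. So the real content is that these two monotone paths, which share the same ambient diagram, are ordered: one lies weakly to one side of the other, and which side is governed by the last step of the promotion path.

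First I would set up the comparison precisely. Let $P = [p_0, p_1, \ldots, p_\ell]$ be the promotion path with $p_0$ the $n$-corner and $p_\ell$ an inside corner; let $D = [d_0, d_1, \ldots, d_m]$ be the dual-promotion path with $d_0 = (1,1)$ and $d_m$ an outside corner. The key structural fact I would extract first is a local comparison lemma: during jeu-de-taquin, when the empty box sits at position $(i,j)$ and must choose to move up to $(i-1,j)$ or left to $(i,j-1)$, it moves up exactly when the entry above exceeds the entry to the left (for promotion-sliding) — and there is an analogous rule for dual-sliding. I would then argue by induction on rows: supposing the promotion path and dual-promotion path have a known relative position in rows $1, \ldots, i$, use the jeu-de-taquin comparison rule together with the fact that $T$ is standard (entries increase along rows and down columns) to propagate the inequality into row $i+1$. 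The base case is handled by the observation already recorded in the appendix: the $n$-corner and the $n$-corner of $\epsilon^*(S)$ differ, which pins down on which side the two paths begin and hence seeds the induction in the correct direction depending on the last move of $P$.

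The cleanest route, though, may be to avoid a direct path-chasing induction and instead exploit the identity $\epsilon^*(T) = \partial \circ \epsilon^* \circ \partial(T)$ together with Remark~\ref{remark: promotion and dual-promotion path}: the dual-promotion path of $T$ is the reverse of the promotion path of $\partial^*(T) = \partial^{-1}(T)$, and similarly one can express the promotion path of $T$ in terms of a dual-promotion path on a related tableau. Then the statement becomes a comparison of two promotion paths on two tableaux that differ by a single promotion step, and monotonicity of jeu-de-taquin slides under such a perturbation (a standard ``commutation of slides'' / ``path does not cross'' argument, in the spirit of the proofs of Facts~\ref{fact: dihedral one} and \ref{fact: dihedral two} in Haiman and in Edelman--Greene) does the rest. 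I would still need the last-move dichotomy to decide the direction: the promotion path ending in an up-move forces the inside corner where it terminates to be strictly below the inside corner terminating the neighboring path, which is exactly the northeast/southwest alternative.

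The main obstacle I expect is making the ``paths don't cross'' intuition rigorous: jeu-de-taquin slide paths genuinely can touch and run along each other, so I must phrase the conclusion as \emph{weakly} northeast/southwest and be careful at the shared rows and at the endpoints. Concretely, the delicate point is the behavior near the column containing the $n$-corner and near column $1$, where both paths are forced into the same boxes; I would handle this by treating the ``first divergence row'' explicitly and invoking the standardness inequalities of $T$ to show the divergence, once it happens, is consistent with the claimed side and never reverses. This is precisely why the authors flag the proof as ``not hard but rather tedious'' — the argument is a finite case analysis of the jeu-de-taquin branch at each row, and the bookkeeping, not the ideas, is the cost.
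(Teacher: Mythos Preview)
Your first sketched approach --- a direct comparison of the two monotone paths using the jeu-de-taquin branching rules --- is exactly what the paper does. The paper phrases it as a boy walking the \emph{reversed} promotion path and a girl walking the dual-promotion path, both starting at $(1,1)$ and stepping southeast in lockstep; the induction is on their successive \emph{meeting points} rather than on rows. At each meeting the box comparisons $T[i,j]$ versus $T[i-1,j+1]$ force the girl to turn east no later than the boy, so she stays weakly northeast; the argument is a short case analysis, not a row-by-row sweep.

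Two places where your plan drifts. First, your base case is confused: the observation that the $n$-corner of $S$ differs from that of $\epsilon^*(S)$ is a \emph{staircase-specific consequence} of this lemma (used to derive Lemma~\ref{lemma: promotion path and dual-promotion path}), not an ingredient in its proof. The general lemma is self-contained; its base case is literally the hypothesis --- the promotion path ends with an up move, so the reversed path's first step out of $(1,1)$ is south --- and both paths share the starting box $(1,1)$, so there is no ``side on which the two paths begin'' to pin down. Second, your proposed cleaner route via $\epsilon^* = \partial \circ \epsilon^* \circ \partial$ trades a comparison of two paths on one tableau for a comparison of promotion paths on two different tableaux (namely $T$ and $\partial^*(T)$); this is not easier, and the paper does not take it. Drop that detour and carry out the direct meeting-point induction --- that is the whole proof.
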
 
\begin{proof}
Without loss of generality, we argue the case where the promotion path of $T$ ends with
a vertical move.

Imagine a boy and a girl standing at the most northwest box of $T$. The boy will walk
along the promotion path in reverse towards the southeast, and the girl will
walk along the dual-promotion path towards the south-east. They will walk at the same speed.

In the first step, the boy goes south by assumption. The girl may go east or south.
If she starts by going east, then she is already strictly northeast of the boy. If she 
starts by going south with the boy, then she must turn east earlier than the boy turns. 
(Suppose the boy turns east at box $(i,j)$.  By definition of promotion path, this implies that 
$T[i,j] > T[i-1,j+1]$.  If the girl goes south at box $(i-1,j)$ then by definition of dual-promotion
path, this implies that $T[i,j] < T[i-1,j+1]$, a contradiction.  Therefore, the girl must turn east at
box $(i-1,j)$ or earlier.)  So either way we see that the girl will be strictly 
northeast of the boy before the boy makes his first east turn.

If they never meet again then we are done. So we assume that their next meeting 
position is at the box $(s,t)$, and argue that they will never cross. By induction 
this will prove the claim.

It is clear that the girl must enter the box $(s,t)$ from north, and the boy must enter
the box $(s,t)$ from the west. From box $(s,t)$, the girl can either go south or go east.

Suppose the girl goes south from $(s,t)$. Then the boy must also go south from $(s,t)$. 
(For if he went east, it would imply that $T[s-1,t+1]<T[s,t]$, which would make the
girl go through $(s-1,t+1)$ instead of $(s,t)$.) Then we can use our previous argument to
show that the girl must make an east turn before the boy, and stay northeast of the boy.

Suppose the girl goes east from $(s,t)$. Then again the boy must go south from $(s,t)$.
(For the girl's behaviour shows that $T[s-1,t+1]>T[s,t]$, but the boy's going east   
would imply that $T[s-1,t+1]<T[s,t]$.) So the girl stays northeast of the boy.
\end{proof}

\end{document}